\documentclass[12pt,a4paper,reqno]{amsart}

\usepackage{amsmath, amssymb, amsthm, mathtools}
\usepackage[margin=2.5cm, headsep=0.75cm, footskip=0.75cm]{geometry}
\usepackage[hang, flushmargin]{footmisc}
\usepackage{enumitem}
\usepackage[english]{isodate}
\usepackage[dvipsnames]{xcolor}
\usepackage{hyperref}
\usepackage[noabbrev, capitalise, nameinlink, nosort]{cleveref}

\hypersetup{pdfauthor={Becky Armstrong, Kevin Aguyar Brix, Toke Meier Carlsen, and Søren Eilers}, pdftitle={Conjugacy of local homeomorphisms via groupoids and C*-algebras}, colorlinks=true, linkcolor=blue, linkbordercolor=blue, citecolor=red, citebordercolor=red, linktocpage=true}

\vbadness=10000

\newcommand{\N}{\mathbb{N}}
\newcommand{\R}{\mathbb{R}}
\newcommand{\T}{\mathbb{T}}
\newcommand{\Z}{\mathbb{Z}}

\renewcommand{\AA}{\mathcal{A}}
\newcommand{\BB}{\mathcal{B}}
\newcommand{\D}{\mathcal{D}}
\newcommand{\G}{\mathcal{G}}
\newcommand{\LL}{\mathcal{L}}
\newcommand{\OO}{\mathcal{O}}
\newcommand{\h}{\mathcal{H}}

\newcommand{\X}{\mathsf{X}}
\newcommand{\Y}{\mathsf{Y}}

\DeclareMathOperator{\Iso}{Iso}

\DeclareMathOperator{\id}{id}
\DeclareMathOperator{\orb}{orb}
\DeclareMathOperator{\osupp}{osupp}
\DeclareMathOperator{\dom}{dom}

\DeclareMathOperator{\ran}{ran}

\newcommand{\restr}[1]{\ensuremath{\vert_{#1}}}
\newcommand{\domsigma}[1]{\dom(\sigma_{#1})}
\newcommand{\domsigmak}[2]{\dom(\sigma^{#2}_{#1})}

\newcommand{\definitionemph}[1]{\emph{\textcolor{magenta}{#1}}\index{#1}}

\let\tilde\widetilde
\let\le\leqslant{}
\let\ge\geqslant{}
\let\subset\subseteq{}
{}

\numberwithin{equation}{section}
\newtheorem{lemma}{Lemma}
\numberwithin{lemma}{section}
\newtheorem{corollary}[lemma]{Corollary}
\newtheorem{theorem}[lemma]{Theorem}
\newtheorem{proposition}[lemma]{Proposition}

\theoremstyle{definition}
\newtheorem{definition}[lemma]{Definition}
\newtheorem{example}[lemma]{Example}

\newtheorem{remark}[lemma]{Remark}
\newtheorem*{theorem*}{Theorem}

\makeatletter\g@addto@macro\bfseries{\boldmath}\makeatother

\makeatletter
\let\origsection\section
\renewcommand\section{\@ifstar{\starsection}{\nostarsection}}
\newcommand\sectionspace{\vspace{0.5ex}}
\newcommand\nostarsection[1]{\sectionspace\origsection{#1}\sectionspace}
\newcommand\starsection[1]{\sectionspace\origsection*{#1}\sectionspace}
\makeatother

\hyphenation{group-oid}



\setenumerate{listparindent=\parindent}
\setlist[enumerate]{font=\normalfont}

\crefname{enumi}{}{}
\crefname{enumi}{}{}

\crefname{equation}{equation}{equations}
\crefname{condition}{condition}{conditions}

\subjclass[2020]{37A55 (primary), 37B10, 46L55 (secondary).}

\date{\today}

\begin{document}

\title[Conjugacy of local homeomorphisms via groupoids and C*-algebras]{Conjugacy of local homeomorphisms via groupoids and C*-algebras}

\author[B.~Armstrong]{Becky Armstrong}
\address[B.~Armstrong]{Mathematical Institute, WWU M\"unster, Einsteinstr.\ 62, 48149 M\"unster, Germany}
\email{\href{mailto:becky.armstrong@uni-muenster.de}{becky.armstrong@uni-muenster.de}}

\author[K.A.~Brix]{Kevin Aguyar Brix}
\address[K.A.~Brix]{School of Mathematics and Statistics, University of Glasgow, University Place, Glasgow G12 8QQ, United Kingdom}
\email{\href{mailto:kabrix.math@fastmail.com}{kabrix.math@fastmail.com}}

\author[T.M.~Carlsen]{Toke Meier Carlsen}
\address[T.M.~Carlsen]{Department of Science and Technology, University of the Faroe Islands, Vestara Bryggja 15, FO-100 T\'orshavn, the Faroe Islands}
\email{\href{mailto:toke.carlsen@gmail.com}{toke.carlsen@gmail.com}}

\author[S.~Eilers]{S{\o}ren Eilers}
\address[S.~Eilers]{Department of Mathematical Sciences, University of Copenhagen, Universitetsparken 5, DK-2100 Copenhagen, Denmark}
\email{\href{mailto:eilers@math.ku.dk}{eilers@math.ku.dk}}

\keywords{Conjugacy, local homeomorphism, Deaconu--Renault system, groupoid, C*-algebra}

\begin{abstract}
We investigate dynamical systems consisting of a locally compact Hausdorff space equipped with a partially defined local homeomorphism. Important examples of such systems include self-covering maps, one-sided shifts of finite type and, more generally, the boundary-path spaces of directed and topological graphs. We characterise topological conjugacy of these systems in terms of isomorphisms of their associated groupoids and C*-algebras. This significantly generalises recent work of Matsumoto and of the second- and third-named authors.
\end{abstract}

\maketitle

\section{Introduction}
The tradition of constructing operator algebras from dynamical systems originated with the seminal work of Murray and von Neumann on the group von Neumann algebra construction \cite{MvN}. This approach has not only produced a plethora of interesting examples of operator algebras, but has also led to interesting results pertaining to topological dynamics. Prime examples of such results are those of Giordano, Putnam, and Skau \cite[Theorems~2.1 and 2.2]{GPS95} that use C*-crossed products to show that Cantor minimal systems can be classified up to (strong) orbit equivalence by K-theory.

C*-algebras constructed from dynamical systems have mainly been obtained as C*-crossed products of actions of locally compact groups on topological spaces. However, in the last forty years or so, C*-algebras constructed from non-invertible actions on topological spaces have also attracted considerable attention. This approach inspired Krieger's dimension group \cite{Krieger80a, Krieger80b}, which has had a tremendous impact on the study of symbolic dynamical systems. A particularly interesting class of these kinds of C*-algebras is the class of C*-algebras arising from a dynamical system consisting of a local homeomorphism acting on a locally compact Hausdorff space \cite{D95, Renault2000}. These C*-algebras come equipped with a family of symmetries induced by functions on the space, and by using a topological groupoid approach, we show how to recover the conjugacy class of the underlying system from the C*-algebra and its family of symmetries.

A system consisting of a locally compact Hausdorff space $\X$ together with a local homeomorphism $\sigma_\X$ between open subsets of $\X$ is called a \definitionemph{Deaconu--Renault system} (see, for example, \cite{D95, Renault2000, CRST, ABS}). Examples of Deaconu--Renault systems include self-covering maps \cite{D95, EV06}, one-sided shifts of finite type~\cite{Williams1973, dlbm:isdc, Kitchens}, the boundary-path space of a directed graph together with the shift map \cite{Webster2014, BCW17}, and more generally, the boundary-path space of a topological graph together with the shift map \cite{KL2017}, the one-sided edge shift space of an ultragraph together with the restriction of the shift map to points with nonzero length \cite{GR19}, the full one-sided shift over an infinite alphabet together with the restriction of the shift map to points with nonzero length \cite{OMW14}, the cover of a one-sided shift space constructed in \cite{BC20b}, and more generally, the canonical local homeomorphism extension of a locally injective map constructed in \cite{Thomsen2011}.

A C*-algebra is naturally associated to a Deaconu--Renault system via a groupoid construction (see, for example, \cite{D95, Renault2000, CRST}), and the class of such C*-algebras includes crossed products by actions of $\Z$ on locally compact Hausdorff spaces, Cuntz--Krieger algebras~\cite{CK80}, graph C*-algebras~\cite{Raeburn2005}, and, via Katsura's topological graphs \cite{Katsura2004}, all Kirchberg algebras (i.e.~all purely infinite, simple, nuclear, separable C*-algebras) satisfying the universal coefficient theorem (see \cite{Katsura2008}), C*-algebras associated with one-sided shift spaces \cite{BC20b}, and C*-algebras of locally injective surjective maps \cite{Thomsen2011}.

It is natural to ask how much information from a dynamical system can be recovered by C*-algebraic data associated with it. It is known that an action of a locally compact group on a topological space can be recovered up to conjugacy from its C*-crossed product together with the corresponding dual action (see \cite[Proposition~4.3]{KOQ18}), and similar results have been obtained in more specialised settings; see, for example, \cite[Theorem~2.4]{GPS95}, \cite[Theorem~3.6]{BT98}, \cite[Theorem~1.2]{Li18}, and \cite[Corollary~7.5 and Theorem~9.1]{CRST}.

In recent years, a similar approach has been used to encode dynamical relations between irreducible shifts of finite type into Cuntz--Krieger algebras.
Matsumoto has been a driving force in this endeavour with his characterisations of continuous orbit equivalence~\cite{Matsumoto2010} and one-sided eventual conjugacy~\cite{Matsumoto2017}, among others. Together with Matui, Matsumoto characterised flow equivalence as diagonal-preserving $*$-isomorphism of stabilised Cuntz--Krieger algebras using groupoids~\cite{MM14} (see also~\cite{CEOR}), and this led the third-named author and Rout to prove similar characterisations for two-sided conjugacy of shifts of finite type~\cite{Carlsen-Rout2017}. Complementing the work of two of the authors~\cite{BC20a}, Matsumoto recently proved that a one-sided conjugacy of irreducible shifts of finite type can be encoded into Cuntz--Krieger algebras using gauge actions~\cite{Matsumoto2021a}. Matsumoto's methods immediately inspired us and helped shape the present work. Since then, Matsumoto has released three other works on related topics~\cite{Matsumoto2020, Matsumoto2021b, Matsumoto2021c}.

In this paper we provide a characterisation of conjugacy of a pair of Deaconu--Renault systems in terms of isomorphisms of their groupoids and their C*-algebras. Our results are summarised in \cref{cor:summary}. Since we work in the general framework of Deaconu--Renault systems, our work complements (and applies) the groupoid reconstruction theory of~\cite{CRST}, which is based on the pioneering work of Renault~\cite{Renault1980, Renault2008} and Kumjian~\cite{Kumjian}. We prove in \cref{prop:gauge-intertwining} that a $*$-isomorphism of the C*-algebras of Deaconu--Renault groupoids that intertwines a sufficiently rich collection of automorphisms induces a conjugacy between the underlying systems. It is noteworthy that we do not require the $*$-isomorphism to be diagonal-preserving. By restricting to the case of one-sided shifts of finite type, we therefore not only recover, but strengthen Matsumoto's \cite[Theorem~1]{Matsumoto2021a} characterisation of one-sided conjugacy (see \cref{cor:graphs}). The proof requires a technical result (\cref{lem:technical-lemma}) which relates actions of the C*-algebra to cocycles on the groupoid, and we believe that this may be of independent interest.

In future work~\cite{ABCE}, we shall approach conjugacy of directed graphs from an algorithmic and combinatorial point of view related to~\cite{ER}.

This paper is organised as follows. In~\cref{sec:prelim} we introduce the necessary notation and preliminaries and establish some basic facts about Deaconu--Renault systems, conjugacy of Deaconu--Renault systems, and Deaconu--Renault groupoids and their C*-algebras. We also provide several examples of Deaconu--Renault systems and Deaconu--Renault groupoids and their C*-algebras and relate them to previous work. In \cref{sec:characterisation} we prove our main results relating conjugacy of Deaconu--Renault systems to the associated groupoids and C*-algebras. Our results are summarised in \cref{cor:summary}, which follows from \cref{prop:sigma,prop:groupoid-conjugacy,prop:gauge-intertwining}.

\section*{Acknowledgements}
The first-named author would like to thank the other three authors for their hospitality during a visit to the University of Copenhagen in 2019. This visit was funded by an Australian Mathematical Society Lift-Off Fellowship. The second-named author is supported by the Carlsberg Foundation through an Internationalisation Fellowship, and is grateful to the people at the University of Wollongong for providing a hospitable and stimulating environment during his stay. The third-named author is supported by Research Council Faroe Islands, and is grateful to the Department of Mathematical Sciences at the University of Copenhagen for their hospitality during a visit in 2019. The fourth-named author was supported by the DFF-Research Project 2 ``Automorphisms and Invariants of Operator Algebras'', no.~7014-00145.

\section{Preliminaries} \label{sec:prelim}
Here we introduce the basic concepts and notation used throughout the paper. We denote the real numbers by $\R$, the integers by $\Z$, the nonnegative integers by $\N$, and the positive integers by $\N_+$. We denote the cardinality of a set $A$ by $\lvert A \rvert$. Given a locally compact Hausdorff space $\X$, we write $C_b(\X)$ for the set of continuous bounded functions from $\X$ to the complex numbers, we write $C_0(\X)$ for the subset of $C_b(\X)$ consisting of functions that vanish at infinity, and we write $C_c(\X)$ for the subset of $C_0(\X)$ consisting of functions that have compact support. The spaces $C_0(\X)$ and $C_b(\X)$ are abelian C*-algebras, and $C_b(\X)$ is (isomorphic to) the multiplier algebra of $C_0(\X)$ (see, for instance, \cite[Example~3.1.3]{Murphy}).

\subsection{Deaconu--Renault systems}
A \definitionemph{Deaconu--Renault system} is a pair $(\X,\sigma_\X)$ consisting of a locally compact Hausdorff space $\X$ and a partially defined local homeomorphism $\sigma_\X\colon \domsigma{\X}\to \ran(\sigma_\X)$, where both $\domsigma{\X}$ and $\ran(\sigma_\X)$ are open subsets of $\X$. Let $\sigma_\X^0 \coloneqq \id_\X$, and inductively define $\dom(\sigma_\X^k) \coloneqq \sigma_\X^{-1}(\dom(\sigma_\X^{k - 1}))$ for $k \in \N$. Then for every $k \in \N$, the map $\sigma_\X^k\colon \dom(\sigma_\X^k) \to \ran(\sigma_\X^k)$ defined by $\sigma_\X^k(x) \coloneqq \sigma_\X^{k-1}(\sigma_\X(x))$ is a local homeomorphism onto an open subset of $\X$. Whenever we write $\sigma_\X^k(x)$ it is to be understood that $x \in \dom(\sigma_\X^k)$. The \definitionemph{orbit} of a point $x \in \X$ is the subset
\[
\orb_\X(x) \coloneqq \bigcup_{k, l \in \N} \sigma_\X^{-l} \big( \sigma_\X^k(x) \big),
\]
and a pair of points $x, y \in \X$ are in the same orbit if and only if $\sigma_\X^k(x) = \sigma_\X^l(y)$ for some $k, l \in \N$. A point $x \in \X$ is \definitionemph{periodic} with \definitionemph{period} $p \in \N_+$ if $x = \sigma_\X^p(x)$, it is \definitionemph{eventually periodic} if $\sigma_\X^n(x)$ is periodic for some $n\in \N$, and it is \definitionemph{aperiodic} if it is not eventually periodic. We say that a Deaconu--Renault system $(\X,\sigma_\X)$ is \definitionemph{topologically free} if the set $\{ x \in \X : x \text{ is not periodic} \}$ is dense in $\X$, and that the system is \definitionemph{second-countable} if $\X$ is second-countable.

Similar systems were studied independently by Deaconu in~\cite{D95} and by Renault in~\cite{Renault2000} (as singly generated dynamical systems). Here we follow the terminology of~\cite[Section~8]{CRST}.

\begin{definition}
Let $(\X,\sigma_\X)$ and $(\Y,\sigma_\Y)$ be Deaconu--Renault systems.
We call a homeomorphism $h\colon \X \to \Y$ a \definitionemph{conjugacy} if $h(\sigma_\X(x)) = \sigma_\Y(h(x))$ and $h^{-1}(\sigma_\Y(y)) = \sigma_\X(h^{-1}(y))$ for all $x \in \domsigma{\X}$ and $y \in \domsigma{\Y}$. We say that the systems $(\X,\sigma_\X)$ and $(\Y,\sigma_\Y)$ are \definitionemph{conjugate} if there exists a conjugacy $h\colon \X \to \Y$.
\end{definition}

\begin{example}
Let $E$ be a directed graph, let $\partial E$ be the boundary-path space of $E$ introduced in \cite{Webster2014}, and let $\sigma_E\colon \partial E^{\ge 1}\to\partial E$ be the shift map described in \cite[Section~2.2]{BCW17}. Then $(\partial E,\sigma_E)$ is a Deaconu--Renault system (see \cite[Section~2.2]{BCW17}). In \cite[Section~6.1]{BCW17}, two directed graphs $E$ and $F$ are defined to be conjugate precisely when the corresponding Deaconu--Renault systems $(\partial E,\sigma_E)$ and $(\partial F,\sigma_F)$ are conjugate; cf.~\cref{lem:alternative-conjugacy}.
\end{example}

\begin{example}
Let $\AA$ be an infinite countable set, let $(\Sigma_{\AA},\sigma)$ be the one-sided full shift over $\AA$ defined in \cite[Definitions~2.1 and~2.22]{OMW14}, and let $\sigma_{\Sigma_{\AA}}$ be the restriction of $\sigma$ to $\Sigma_{\AA}\setminus\{\vec{0}\}$. It follows from \cite[Proposition~2.5 and Proposition~2.23]{OMW14} that $(\Sigma_{\AA},\sigma_{\Sigma_{\AA}})$ is a Deaconu--Renault system.

If $\AA$ and $\BB$ are infinite countable sets and $\phi\colon\Sigma_{\AA}\to \Sigma_{\BB}$ is a conjugacy as defined in \cite[Definition~4.8]{OMW14}, then it follows from \cite[Proposition~4.2 and Remark~4.9]{OMW14} that $\phi$ is also a conjugacy between the Deaconu--Renault systems $(\Sigma_{\AA},\sigma_{\Sigma_{\AA}})$ and $(\Sigma_{\BB},\sigma_{\Sigma_{\BB}})$.
\end{example}

\begin{example}
Let $\G$ be an ultragraph satisfying the condition (RFUM) introduced in \cite{GR19}, let $(\X_\G,\sigma_\G)$ be the one-sided edge shift of $\G$ constructed in \cite{GR19}, and let $\sigma_{\X_\G}$ be the restriction of $\sigma_\G$ to $\X_\G\setminus \mathfrak{p}^0$, where $\mathfrak{p}^0$ is the set of ultrapaths of length $0$ (see \cite[Section~2.1]{GR19}). It follows from \cite[Propositions~3.7, 3.12, and~3.16]{GR19} that $(\X_\G,\sigma_{\X_\G})$ is a Deaconu--Renault system.

If $\G_1$ and $\G_2$ are two ultragraphs satisfying condition (RFUM), then a map $\phi\colon \X_{\G_1} \to \X_{\G_2}$ is a conjugacy between the Deaconu--Renault systems $(\X_{\G_1},\sigma_{\X_{\G_1}})$ and $(\X_{\G_2},\sigma_{\X_{\G_2}})$ if and only if it is a length-preserving conjugacy, as defined in \cite[Definition~3.19]{GR19}.
\end{example}

The following is an example of a homeomorphism $h\colon \X \to \Y$ between two Deaconu--Renault systems $(\X,\sigma_\X)$ and $(\Y,\sigma_\Y)$ that is not a conjugacy, even though $h(\sigma_\X(x)) = \sigma_\Y(h(x))$ for all $x \in \domsigma{\X}$.

\begin{example} \label{eg:complex-circle}
Consider the graph $E$ consisting of a single vertex, and the graph $F$ consisting of a single loop. The boundary-path spaces $\partial E$ and $\partial F$ are both singletons, and so there is a homeomorphism $h\colon \partial E \to \partial F$ that trivially satisfies $h(\sigma_E(x)) = \sigma_F(h(x))$ for all $x \in \dom(\sigma_E)$ (since $\dom(\sigma_E) = \varnothing$). However, $(\partial E, \sigma_E)$ and $(\partial F, \sigma_F)$ are not conjugate systems, because $\dom(\sigma_F) = \partial F$, whereas $\dom(\sigma_E) = \varnothing$.
\end{example}

It will be convenient later to have slight reformulations of the conjugacy condition at our disposal. Note that the conditions in item~\cref{item:inverse-images} below are set equalities.

\begin{lemma} \label{lem:alternative-conjugacy}
Let $(\X,\sigma_\X)$ and $(\Y,\sigma_\Y)$ be Deaconu--Renault systems, and let $h\colon \X \to \Y$ be a homeomorphism. The following statements are equivalent.
\begin{enumerate}[label=(\arabic*)]
\item \label{item:conjugacy} $h\colon \X \to \Y$ is a conjugacy.
\item \label{item:domain} $h(\domsigma{\X}) = \domsigma{\Y}$, and $h \circ \sigma_\X = \sigma_\Y \circ h$ on $\domsigma{\X}$.
\item \label{item:inverse-images} $h(\sigma_\X^{-1}(x)) = \sigma_\Y^{-1}(h(x))$ and $h^{-1}(\sigma_\Y^{-1}(y)) = \sigma_\X^{-1}(h^{-1}(y))$, for all $x \in \X$ and $y \in \Y$.
\end{enumerate}
In particular, if the two systems have globally defined dynamics (i.e.~if $\domsigma{\X} = \X$ and $\domsigma{\Y} = \Y$), then the condition $h \circ \sigma_\X = \sigma_\Y \circ h$ is equivalent to $h$ being a conjugacy.
\end{lemma}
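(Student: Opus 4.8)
The plan is to prove the two equivalences \cref{item:conjugacy} $\Leftrightarrow$ \cref{item:domain} and \cref{item:domain} $\Leftrightarrow$ \cref{item:inverse-images}, and then to read off the final assertion from the first of these. The single recurring theme—and the only point that needs genuine care—is the bookkeeping of domains: the forward intertwining relation $h \circ \sigma_\X = \sigma_\Y \circ h$ on $\domsigma{\X}$ is common to all three conditions, and the real content of each is the (sometimes implicit) claim that $h$ matches domains, $h(\domsigma{\X}) = \domsigma{\Y}$. As \cref{eg:complex-circle} illustrates, this domain-matching is exactly what fails when one assumes only the bare intertwining relation; everything else below is formal manipulation using that $h$ is a bijection.

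For \cref{item:conjugacy} $\Rightarrow$ \cref{item:domain}, the intertwining relation is literally the first conjugacy equation, so only the domain equality requires proof, and I would obtain it by double inclusion: if $x \in \domsigma{\X}$ then $\sigma_\Y(h(x)) = h(\sigma_\X(x))$ is defined, so $h(x) \in \domsigma{\Y}$; conversely, if $y \in \domsigma{\Y}$, the second conjugacy equation exhibits $\sigma_\X(h^{-1}(y)) = h^{-1}(\sigma_\Y(y))$ as defined, whence $h^{-1}(y) \in \domsigma{\X}$, i.e.\ $y \in h(\domsigma{\X})$. For the converse \cref{item:domain} $\Rightarrow$ \cref{item:conjugacy}, the first conjugacy equation is again immediate, and for the second I would use the domain equality to write a given $y \in \domsigma{\Y}$ as $y = h(x)$ with $x \in \domsigma{\X}$, apply the intertwining relation, and then apply $h^{-1}$ to obtain $\sigma_\X(h^{-1}(y)) = h^{-1}(\sigma_\Y(y))$.

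For \cref{item:domain} $\Rightarrow$ \cref{item:inverse-images}, I would fix $x \in \X$ and prove the set equality $h(\sigma_\X^{-1}(x)) = \sigma_\Y^{-1}(h(x))$ by double inclusion: the forward inclusion uses the intertwining relation directly, while the reverse inclusion uses the domain equality to pull a point $w \in \sigma_\Y^{-1}(h(x))$ back to $h^{-1}(w) \in \domsigma{\X}$ before applying $h^{-1}$ to $\sigma_\Y(w) = h(x)$. The second equation in \cref{item:inverse-images} follows formally by substituting $y = h(x)$ into the first and applying the bijection $h^{-1}$. For the reverse implication \cref{item:inverse-images} $\Rightarrow$ \cref{item:domain}, I would use the decomposition $\domsigma{\X} = \bigcup_{x \in \X} \sigma_\X^{-1}(x)$; applying $h$, invoking \cref{item:inverse-images}, and using that $h$ maps $\X$ onto $\Y$ gives $h(\domsigma{\X}) = \bigcup_{y \in \Y} \sigma_\Y^{-1}(y) = \domsigma{\Y}$, while the intertwining relation is recovered by taking $z \in \domsigma{\X}$, setting $x = \sigma_\X(z)$, and reading $\sigma_\Y(h(z)) = h(x) = h(\sigma_\X(z))$ off the membership $h(z) \in \sigma_\Y^{-1}(h(x))$.

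Finally, when $\domsigma{\X} = \X$ and $\domsigma{\Y} = \Y$, the domain equality $h(\domsigma{\X}) = \domsigma{\Y}$ reduces to $h(\X) = \Y$, which is automatic since $h$ is a homeomorphism; thus \cref{item:domain} collapses to the single relation $h \circ \sigma_\X = \sigma_\Y \circ h$, and the equivalence \cref{item:conjugacy} $\Leftrightarrow$ \cref{item:domain} yields the stated conclusion. I do not expect a serious obstacle: the proof is driven entirely by set-theoretic inclusions, and the only thing that could go wrong is mishandling the partially defined maps or the (possibly empty, possibly multi-point) preimage sets $\sigma_\X^{-1}(x)$—so the work lies in writing the inclusions out carefully rather than in surmounting a conceptual difficulty.
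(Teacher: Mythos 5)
Your proposal is correct, and it takes a mildly but genuinely different route from the paper. The paper proves the two equivalences $(1)\Leftrightarrow(2)$ and $(1)\Leftrightarrow(3)$ directly, whereas you chain $(1)\Leftrightarrow(2)\Leftrightarrow(3)$, deriving the preimage identities from the domain-matching condition rather than from the conjugacy equations. Your route buys two small simplifications. First, in the paper's direct proof of $(1)\Rightarrow(3)$ the case $\sigma_\X^{-1}(x)=\varnothing$ is handled by a separate contradiction argument; in your version the double-inclusion argument (and the union $\domsigma{\X}=\bigcup_{x\in\X}\sigma_\X^{-1}(x)$ in the reverse direction) treats empty preimage sets with no case split at all. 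Second, you observe that the second identity in (3) is \emph{formally equivalent} to the first, by applying the bijection $h^{-1}$ to both sides and substituting $y=h(x)$; the paper instead invokes ``a similar argument'' to establish it independently, which is slightly wasteful. The only place where care is genuinely needed --- and you flag it correctly --- is that membership statements such as $\sigma_\Y(h(z))=h(x)$ implicitly assert $h(z)\in\domsigma{\Y}$, which in your $(3)\Rightarrow(2)$ step is supplied by $h(z)\in\sigma_\Y^{-1}(h(x))\subset\domsigma{\Y}$, and in the $(2)\Rightarrow(3)$ reverse inclusion by the domain equality $h(\domsigma{\X})=\domsigma{\Y}$. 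The treatment of the final assertion (globally defined dynamics) coincides with the paper's.
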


\begin{proof}
\cref{item:conjugacy}$\iff$\cref{item:domain}:
Assume first that $h\colon \X \to \Y$ is a conjugacy. If $x \in \domsigma{\X}$, then $h(\sigma_\X(x)) = \sigma_\Y(h(x))$, and so $h(\domsigma{\X}) \subset \domsigma{\Y}$. For the reverse inclusion, fix $y \in \domsigma{\Y}$. Then $\sigma_\X(h^{-1}(y)) = h^{-1}(\sigma_\Y(y))$, and so $y \in h(\domsigma{\X})$.

For the converse, we need to verify that $h^{-1} \circ \sigma_\Y = \sigma_\X \circ h^{-1}$ on $\domsigma{\Y}$. Fix $y \in \domsigma{\Y}$, and let $x \coloneqq h^{-1}(y) \in \domsigma{\X}$. Then $h(\sigma_\X(x)) = \sigma_\Y(h(x)) = \sigma_\Y(y)$, and so $h^{-1}(\sigma_\Y(y)) = \sigma_\X(x) = \sigma_\X(h^{-1}(y))$, as required.

\cref{item:conjugacy}$\iff$\cref{item:inverse-images}:
Assume first that $h$ is a conjugacy, and fix $x \in \X$. Suppose that $\sigma_\X^{-1}(x)$ is nonempty, and fix $z \in \sigma_\X^{-1}(x)$. Then $\sigma_\Y(h(z)) = h(\sigma_\X(z)) = h(x)$, and thus $h(z) \in \sigma_\Y^{-1}(h(x))$. It follows that $h(\sigma_\X^{-1}(x)) \subset \sigma_\Y^{-1}(h(x))$. For the reverse inclusion, fix $w \in \sigma_\Y^{-1}(h(x))$. Then $\sigma_\X(h^{-1}(w)) = h^{-1}(\sigma_\Y(w)) = x$, and so $w \in h(\sigma_\X^{-1}(x))$. Therefore, $\sigma_\Y^{-1}(h(x)) \subset h(\sigma_\X^{-1}(x))$. Suppose instead that $\sigma_\X^{-1}(x)$ is empty. We claim that $\sigma_\Y^{-1}(h(x))$ is also empty. Suppose for contradiction that there exists $w \in \sigma_\Y^{-1}(h(x))$. Then $\sigma_\X(h^{-1}(w)) = h^{-1}(\sigma_\Y(w)) = x$, and thus $h^{-1}(w) \in \sigma_\X^{-1}(x)$, which contradicts the hypothesis that $\sigma_\X^{-1}(x) = \varnothing$. A similar argument shows that if $h$ is a conjugacy, then $h^{-1}(\sigma_\Y^{-1}(y)) = \sigma_\X^{-1}(h^{-1}(y))$ for all $y \in \Y$.

For the converse, fix $x \in \domsigma{\X}$, and let $w \coloneqq h(\sigma_\X(x))$.Then
\[
x \in \sigma_\X^{-1}(\sigma_\X(x)) = \sigma_\X^{-1}(h^{-1}(w)) = h^{-1}(\sigma_\Y^{-1}(w))
\]
by hypothesis, and so $\sigma_\Y(h(x)) = w = h(\sigma_\X(x))$. A similar argument shows that $h^{-1}(\sigma_\Y(y)) = \sigma_\X(h^{-1}(y))$ for all $y \in \domsigma{\Y}$, and so $h$ is a conjugacy.
\end{proof}

\begin{remark}
As~\cref{eg:complex-circle} shows, there are some subtleties involved in the definition of a conjugacy of arbitrary Deaconu--Renault systems: it is important that we have conditions on both $h$ and its inverse $h^{-1}$. However, if the two systems have globally defined dynamics (i.e.~if $\domsigma{\X} = \X$ and $\domsigma{\Y} = \Y$), then \cref{lem:alternative-conjugacy} implies that the condition $h \circ \sigma_\X = \sigma_\Y \circ h$ is sufficient. In the case of directed graphs, the condition of having globally defined dynamics is equivalent to there being no singular vertices.
\end{remark}

We now introduce two maps $(\sigma_\X)^*\colon C_0(\X) \to C_b(\domsigma{\X})$ and $(\sigma_\X)_*\colon C_c(\domsigma{\X}) \to C_c(\X)$ that we will use in \cref{cor:summary,prop:sigma} to characterise conjugacy of Deaconu--Renault systems.

Suppose that $(\X,\sigma_\X)$ is a Deaconu--Renault system. We define the map $(\sigma_\X)^*\colon C_0(\X) \to C_b(\domsigma{\X})$ by
\[
(\sigma_\X)^*(f)(x) \coloneqq f(\sigma_\X(x)),
\]
for all $f \in C_0(\X)$ and $x \in \domsigma{\X}$. The following example shows that the range of $(\sigma_\X)^*$ is in general larger than the collection of functions vanishing at infinity.

\begin{example}
Let $E$ be the graph with two vertices $v$ and $w$ such that $v$ emits infinitely many edges $\{e_n : n \in \N\}$ to $w$. Then $1_{Z(w)} \in C_0(\partial E)$, and $1_{Z(w)} \circ \sigma_E$ is defined and nonzero (and bounded) on the entire non-compact set $\dom(\sigma_E) = \{ e_n : n \in \N \}$.
\end{example}

Since $\sigma_\X$ is a local homeomorphism, there is a map $(\sigma_\X)_*\colon C_c(\domsigma{\X}) \to C_c(\X)$ given by
\[
(\sigma_\X)_*(f)(x) \coloneqq \sum_{z \in \sigma_\X^{-1}(x)} f(z),
\]
for all $f \in C_c(\domsigma{\X})$ and $x \in \X$.

\subsection{Deaconu--Renault groupoids}

Every Deaconu--Renault system $(\X,\sigma_\X)$ gives rise to a \definitionemph{Deaconu--Renault groupoid}
\[
\G_\X \coloneqq \bigcup_{m,n \in \N} \big\{ (x, m - n, y) \in \domsigmak{\X}{m} \times \{m - n\} \times \domsigmak{\X}{n} \,:\, \sigma_\X^m(x) = \sigma_\X^n(y) \big\},
\]
with composable pairs $\G_\X^{(2)} \coloneqq \big\{ \big((x,p,y), (w,q,z)\big) \in \G_\X \times \G_\X : y = w \big\}$ (cf.~\cite[Section~8]{CRST}, \cite[Definition~5.4]{KL2017}, and \cite[Definition~2.4]{Renault2000}). Multiplication is defined on $\G_\X^{(2)}$ by $(x,p,y)(y,q,z) \coloneqq (x,p+q,z)$, and inversion is defined on $\G_\X$ by $(x, p, y)^{-1} \coloneqq (y, -p, x)$. The range and source maps of $\G_\X$ are $r\colon (x, p, y) \mapsto x$ and $s\colon (x, p, y) \mapsto y$, respectively. The unit space of $\G_\X$ is $\G_\X^{(0)} = \{ (x, 0, x) \in \G_\X : x \in \X\}$, and we identify it with $\X$ via the map $(x, 0, x) \mapsto x$. Given $m, n \in \N$ and open subsets $U$ and $V$ of $\X$, we define
\begin{equation} \label{eq:basic-open-groupoid}
Z(U, m, n, V) \coloneqq \big\{ (x, m - n, y) \in \G_\X \,:\, x \in U, \, y \in V, \, \sigma_\X^m(x) = \sigma_\X^n(y) \big\},
\end{equation}
and the collection of all such sets forms a basis for a topology on $\G_\X$. Equivalently, the topology is generated by sets of the form~\labelcref{eq:basic-open-groupoid}, where, in addition, $\sigma_\X^m\restr{U}$ and $\sigma_\X^n\restr{V}$ are homeomorphisms onto their images, and $\sigma_\X^m(U) = \sigma_\X^n(V)$. Under this topology, $\G_\X$ is an amenable locally compact Hausdorff groupoid which is \definitionemph{\'etale}, in the sense that the range and source maps are local homeomorphisms. A set $B \subset \G_\X$ is called a \definitionemph{bisection} of $\G_\X$ if there is an open subset $U$ of $\G_\X$ such that $B \subset U$, and $r\restr{U}$ and $s\restr{U}$ are homeomorphisms onto open subsets of $\G_\X^{(0)}$. Every \'etale groupoid has a basis consisting of open bisections, and by choosing $U, V \subset \X$ sufficiently small, the sets of the form~\labelcref{eq:basic-open-groupoid} become open bisections of $\G_\X$. Moreover, $\G_\X$ is second-countable when $\X$ is second-countable, and $\G_\X$ is \definitionemph{ample} (meaning it has a basis of \emph{compact} open bisections) when $\X$ is totally disconnected. Given Deaconu--Renault groupoids $\G_\X$ and $\G_\Y$ and a map $\psi\colon \G_\X \to \G_\Y$, we write $\psi^{(0)}\colon \X \to \Y$ for the map induced by the restriction $\psi\restr{\G_\X^{(0)}}\colon \G_\X^{(0)} \to \G_\Y^{(0)}$.

The \definitionemph{isotropy} of a groupoid $\G$ is the subgroupoid $\Iso(\G) \coloneqq \{ \gamma \in \G : r(\gamma) = s(\gamma) \}$. If $\G$ is a second-countable locally compact Hausdorff \'etale groupoid, then so is the interior $\Iso(\G)^\circ$ of the isotropy of $\G$. Since the unit space of an \'etale groupoid is open, we have $\G^{(0)} \subseteq \Iso(\G)^\circ$ if $\G$ is \'etale.

We now prove a well known result that we were unable to find in the literature.

\begin{lemma} \label{lem:top-free-implies-effective}
If $(\X,\sigma_\X)$ is a topologically free Deaconu--Renault system, then $\Iso(\G_\X)^\circ = \X$.
\end{lemma}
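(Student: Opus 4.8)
The inclusion $\X \subseteq \Iso(\G_\X)^\circ$ is immediate: each unit $(x,0,x)$ lies in the isotropy, and $\X = \G_\X^{(0)}$ is open in the \'etale groupoid $\G_\X$. So the plan is to establish the reverse inclusion $\Iso(\G_\X)^\circ \subseteq \X$ by contradiction. Assuming there is some $\gamma = (x, m-n, x) \in \Iso(\G_\X)^\circ$ with $m - n \neq 0$, I would manufacture from it a nonempty open subset of $\X$ consisting entirely of periodic points, contradicting topological freeness.

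Since the sets of the form~\labelcref{eq:basic-open-groupoid} for which $\sigma_\X^m\restr{U}$ and $\sigma_\X^n\restr{V}$ are homeomorphisms onto their images with $\sigma_\X^m(U) = \sigma_\X^n(V)$ form a basis for the topology, and since $\Iso(\G_\X)^\circ$ is open, I would first fix such a basic open bisection $B = Z(U,m,n,V)$ with $\gamma \in B \subseteq \Iso(\G_\X)$, choosing $m,n \in \N$ so that $x \in \domsigmak{\X}{m} \cap \domsigmak{\X}{n}$ and $\sigma_\X^m(x) = \sigma_\X^n(x)$. The key step is then a pointwise computation on $U$: given any $x' \in U$, the equality $\sigma_\X^m(U) = \sigma_\X^n(V)$ supplies a (unique) $y' \in V$ with $\sigma_\X^n(y') = \sigma_\X^m(x')$, so that $(x', m-n, y') \in B$. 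Because $B$ lies in the isotropy, $y' = x'$, and hence $\sigma_\X^m(x') = \sigma_\X^n(x')$ for every $x' \in U$.

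To finish, I would push this relation forward. After replacing $\gamma$ by $\gamma^{-1}$ if necessary (isotropy and its interior are invariant under the inversion homeomorphism of $\G_\X$), I may assume $m > n$. Then for each $x' \in U$ the point $y \coloneqq \sigma_\X^n(x')$ satisfies $\sigma_\X^{m-n}(y) = \sigma_\X^m(x') = \sigma_\X^n(x') = y$, so $y$ is periodic with period $m - n$. Since $\sigma_\X^n$ is a local homeomorphism, hence an open map, $\sigma_\X^n(U)$ is a nonempty open subset of $\X$ all of whose points are periodic, contradicting the density of the non-periodic points.

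I expect the only genuinely delicate point to be this last maneuver. Working pointwise on $U$ shows directly only that each $x' \in U$ is \emph{eventually} periodic, which by itself does not contradict topological freeness as defined here, since that notion concerns periodic rather than merely eventually periodic points. The resolution is to apply $\sigma_\X^n$ and exploit that it is an open map: the forward image $\sigma_\X^n(U)$ is an open set on which the points are genuinely periodic. The remaining bookkeeping — selecting $m,n$ realising the range condition, and reducing the case $m < n$ to $m > n$ via inversion — is routine.
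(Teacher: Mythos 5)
Your argument is correct and follows essentially the same route as the paper's proof: shrink to a basic open bisection $Z(U,m,n,V)$ inside the isotropy, deduce $\sigma_\X^m(x')=\sigma_\X^n(x')$ for all $x'\in U$, and push forward to obtain a nonempty open set of genuinely periodic points contradicting topological freeness. Your explicit attention to the gap between ``eventually periodic'' and ``periodic'' (resolved by applying the open map $\sigma_\X^n$) is exactly the point the paper handles by observing that every element of $\sigma_\X^k(U)$ is periodic with period $\lvert k-l\rvert$.
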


\begin{proof}
Suppose for contradiction that there exist $x \in \X$ and $k,l \in \N$ with $k \ne l$ and $\sigma_\X^k(x)=\sigma_\X^l(x)$ such that $(x,k-l,x)\in \Iso(\G_\X)^\circ$. Then there are nonempty open subsets $U,V \subseteq \X$ such that $\sigma_\X^k\restr{U}$ and $\sigma_\X^l\restr{V}$ are homeomorphisms onto their images, $\sigma_\X^k(U) = \sigma_\X^l(V)$, and $Z(U,k,l,V) \subseteq \Iso(\G_\X)$. We therefore have that $\sigma_\X^k(x') = \sigma_\X^l(x')$ for any $x' \in U$. It follows that any element of the open subset $\sigma_\X^k(U)$ is periodic with period $\lvert k-l \rvert$, but this contradicts the assumption that $(\X,\sigma_\X)$ is topologically free. Thus, $\Iso(\G_\X)^\circ = \X$.
\end{proof}

Given a group $\Gamma$ and a function $f\colon \X \to \Gamma$, for each $k \in \N$ and $x \in \domsigmak{\X}{k}$, we write
\[
f^{(k)}(x) \coloneqq \sum_{i=0}^{k-1} f(\sigma_\X^i(x)),
\]
where it is understood that $f^{(0)} = 0$. We use additive notation because in all of our examples of interest $\Gamma$ will be abelian, but of course a similar expression using multiplicative notation makes sense.

We call a continuous groupoid homomorphism from $\G_\X$ into a topological group $\Gamma$ a \definitionemph{continuous cocycle}. Any continuous map $f\colon \X \to \Gamma$ gives rise to a continuous cocycle $c_f\colon \G_\X \to \Gamma$ defined by
\begin{equation} \label{eq:cocycle-induced-by-function}
c_f(x, m - n, y) \coloneqq f^{(m)}(x) - f^{(n)}(y) = \sum_{i=0}^{m-1} f(\sigma_\X^i(x)) - \sum_{j=0}^{n-1} f(\sigma_\X^j(y)),
\end{equation}
for $(x, m - n, y) \in \G_\X$ satisfying $\sigma_\X^m(x) = \sigma_\X^n(y)$. When $\Gamma = \Z$ and $f \equiv 1$, we obtain the \definitionemph{canonical continuous cocycle} $c_f \colon (x,p,y) \mapsto p$, which we denote by $c_\X$.

\subsection{Deaconu--Renault C*-algebras}
Given a locally compact Hausdorff \'etale groupoid $\G$, there are associated full and reduced groupoid C*-algebras $C^*(\G)$ and $C_r^*(\G)$ encoding the structure of $\G$; see, for instance, \cite{Sims-notes, Renault1980} for details. An argument similar to the one used in \cite[Lemma~3.5]{Sims-Williams} shows that Deaconu--Renault groupoids are amenable, so the two C*-algebras $C^*(\G_\X)$ and $C_r^*(\G_\X)$ are canonically $*$-isomorphic, and we shall not distinguish between them: we let $C^*(\G_\X)$ denote \emph{the} C*-algebra associated to $\G_\X$. Since $\G_\X$ is \'etale, the unit space $\G_\X^{(0)} \cong \X$ is open in $\G_\X$, and we view the \definitionemph{diagonal subalgebra} $C_0(\X)$ as a subalgebra of $C^*(\G_\X)$. Note that $C_0(\X)$ need not be a C*-diagonal (in the sense of Kumjian~\cite{Kumjian}), nor a Cartan subalgebra (in the sense of Renault~\cite{Renault2008}).

The \definitionemph{Pontryagin dual} of a locally compact abelian group $\Gamma$ is the locally compact group $\widehat{\Gamma}$ consisting of continuous group homomorphisms from $\Gamma$ to the circle group $\T$, endowed with the compact-open topology. A continuous cocycle $c\colon \G_\X \to \Gamma$ induces an action $\gamma^c \colon \widehat{\Gamma} \curvearrowright C^*(\G_\X)$ satisfying
\begin{equation} \label{eq:action-induced-by-cocycle}
\gamma^c_\chi(\xi)(\eta) = \chi(c(\eta)) \, \xi(\eta),
\end{equation}
for $\chi \in \widehat{\Gamma}$, $\xi \in C_c(\G)$, and $\eta \in \G$; cf.~\cite[Chapter~2, Section~5]{Renault1980}. In particular, there is a \definitionemph{weighted action} $\gamma^{\X,f}\colon \widehat{\Gamma} \curvearrowright C^*(\G_\X)$ associated to each $f \in C(\X,\Gamma)$, satisfying
\begin{equation}
\gamma_\chi^{\X, f}(\xi)(x,m - n, y) = \chi\big(f^{(m)}(x) - f^{(n)}(y)\big) \, \xi(x, m - n, y),
\end{equation}
for $\chi \in \widehat{\Gamma}$, $\xi \in C_c(\G_\X)$, and $(x,m - n, y) \in \G_\X$ satisfying $\sigma_\X^m(x) = \sigma_\X^n(y)$. The \definitionemph{canonical gauge action} $\gamma^\X\colon \T\curvearrowright C^*(\G_\X)$ is induced by the canonical continuous cocycle $c_\X$ on $\G_\X$.

\begin{example}
Let $\X$ be a locally compact Hausdorff space, and let $\sigma\colon \X \to \X$ be a homeomorphism. Then $(\X,\sigma)$ is a Deaconu--Renault system. The Deaconu--Renault groupoid $\G_\X$ of $(\X,\sigma)$ is isomorphic to the transformation groupoid $\X\times_\sigma\Z$ (see, for instance, \cite[Examples~I.1.2(a)]{Renault1980}), and $C^*(\G_\X)$ is isomorphic to the crossed product $C_0(\X) \rtimes_\sigma \Z$ by an isomorphism that restricts to the identity map on $C_0(\X)$ and intertwines the canonical gauge action of $C^*(\G_\X)$ and the dual action of $C_0(\X) \rtimes_\sigma \Z$ (see, for instance, \cite[Example~9.2.6]{Sims-notes}).
\end{example}

\begin{example}
If $\X$ is a locally compact Hausdorff space and $\sigma\colon \X \to \X$ is a covering map (that is, $\sigma$ is continuous and surjective, and for each $x \in \X$, there is an open neighbourhood $V$ of $x$ such that $T^{-1}(V)$ is a disjoint union of open sets $(U_i)_{i \in I}$ such that $\sigma\restr{U_i}$ is a homeomorphism onto $V$ for each $i \in I$), then $(\X,\sigma)$ is a Deaconu--Renault system. The Deaconu--Renault groupoid $\G_\X$ of $(\X,\sigma)$ is the groupoid $\Gamma$ considered in \cite{D95}. If $\X$ is compact and each $x \in \X$ has exactly $p$ preimages under $\sigma$, then according to \cite[Theorem~9.1]{EV06}, the C*-algebra $C^*(\G_\X)$ is isomorphic to the crossed product $C(\X)\rtimes_{\alpha,\LL}\N$, where $\alpha$ is the $*$-homomorphism from $C(\X)$ to $C(\X)$ given by $\alpha(f) \coloneqq f\circ\sigma$, and $\LL$ is the transfer operator from $C(\X)$ to $C(\X)$ given by $\LL(f)(x) \coloneqq \frac{1}{p} \sum_{y \in \sigma^{-1}(x)} f(y)$. The proof of \cite[Theorem~9.1]{EV06} goes through with minor modifications if the assumption that each $x \in \X$ has exactly $p$ preimages under $\sigma$ is dropped and the definition of $\LL(f)(x)$ is changed to $\frac{1}{\lvert \sigma^{-1}(x) \rvert} \sum_{y \in \sigma^{-1}(x)} f(y)$.
\end{example}

\begin{example}
Let $E$ be a directed graph. The Deaconu--Renault groupoid $\G_{\partial E}$ of the Deaconu--Renault system $(\partial E,\sigma_E)$ is the graph groupoid $\G_E$ described, among other places, in \cite[Section~2.3]{BCW17}, and $C^*(\G_{\partial E})$ is isomorphic to the graph C*-algebra $C^*(E)$ of $E$ by an isomorphism that maps $C_0(\partial E)$ onto the diagonal C*-subalgebra $\D(E)$ of $C^*(E)$ and intertwines the canonical gauge action $\gamma^{\partial E}\colon \T\curvearrowright C^*(\G_{\partial G})$ and the gauge action $\gamma^E\colon \T \curvearrowright C^*(E)$ (see, for instance, \cite[Proposition~2.2]{BCW17}).

In~\cite[Section~3]{Carlsen-Rout2017} the third-named author and Rout consider generalised gauge actions. A function $k\colon E^1\to \R$ extends to a function $k\colon E^*\to \R$ by setting $k\restr{E^0} \equiv 0$ and $k(e_1 \dotsb e_n) \coloneqq k(e_1) + \dotsb + k(e_n)$ for $e_1 \dotsb e_n \in E^*\setminus E^0$. There is a continuous cocycle $c_k\colon \G_E\to \R$ given by $c_k(\mu x, \lvert \mu \rvert - \lvert \nu \rvert, \nu x) \coloneqq k(\mu) - k(\nu)$, and this cocycle induces a generalised gauge action $\gamma^{E,k}\colon \R \curvearrowright C^*(\G_E)$, as defined in \cref{eq:action-induced-by-cocycle}. If $f\colon \partial E \to \R$ is the continuous map given by $f\restr{Z(e)} \equiv k(e)$ for all $e \in E^1$, then the induced cocycle $c_f$ defined in \cref{eq:cocycle-induced-by-function} agrees with $c_k$ on $\G_E$, and thus our weighted actions include all generalised gauge actions.
\end{example}

\begin{example}
Let $E$ be a topological graph, let $\partial E$ be the boundary-path space of $E$ defined in \cite[Definition~3.1]{KL2017}, and let $\sigma\colon \partial E\setminus E^0_{\operatorname{sg}}\to \partial E$ be the one-sided shift map mentioned in \cite[Lemma~6.1]{KL2017}. It follows from \cite[Proposition~3.6 and Lemma~6.1]{KL2017} and \cite[Theorem~3.16 and Proposition~4.4]{Ye07} that $(\partial E,\sigma)$ is a Deaconu--Renault system. The Deaconu--Renault groupoid $\G_{\partial E}$ of $(\partial E,\sigma)$ is the groupoid $\Gamma(\partial E,\sigma)$ defined in \cite[Definition~6.6]{KL2017}. It follows from \cite[Theorem~6.7]{KL2017} that $C^*(\G_{(\partial E,\sigma)})$ is isomorphic to the C*-algebra $\OO_E$ introduced in \cite{Katsura2004}. One can check that the isomorphism given in \cite[Theorem~6.7]{KL2017} intertwines the canonical gauge actions of $C^*(\G_{\partial E})$ and $\OO_E$.
\end{example}

\begin{example}
Let $(\X,\sigma_\X)$ be a one-sided shift of finite type (see, for instance, \cite[Section~13.8]{dlbm:isdc}). It follows from \cite[Theorem~1]{siyt:msrb} that $(\X,\sigma_\X)$ is a Deaconu--Renault system. The Deaconu--Renault groupoid $\G_\X$ of $(\X,\sigma_\X)$ is identical to the groupoid $\G_\X$ described in \cite[Section~2.6]{CEOR}.

If $A$ is an $n \times n$ $\{0,1\}$-matrix and $(\X_A,\sigma_{\X_A})$ is the one-sided topological Markov shift defined by $A$ (see, for instance, \cite[Page~3]{Kitchens}), then the Deaconu--Renault groupoid $\G_{\X_A}$ of $(\X_A,\sigma_{\X_A})$ is the groupoid $G_A$ described in \cite[Section~2.2]{MM14}, and $C^*(\G_{\X_A})$ is isomorphic to the Cuntz--Krieger algebra $\OO_A$ \cite{CK80} with generators $s_1, \dotsc, s_n$, via an isomorphism that maps $C(\X_A)$ onto the C*-subalgebra $\D_A$ generated by the projections $s_is_i^*$, and intertwines the canonical gauge action $\gamma^{\X_A}\colon \T\curvearrowright C^*(\G_{\X_A})$ and the gauge action $\lambda\colon \T \curvearrowright \OO_A$ (see, for instance, \cite[Section~2.3]{MM14}). Moreover, the isomorphism between $C^*(\G_{\X_A})$ and $\OO_A$ can be constructed such that it has the property that if $g\in C(\X_A,\Z)$, then it intertwines the weighted gauge action $\gamma^{\X_A,g} \colon \T \curvearrowright C^*(\G_{\X_A})$ and the action $\rho^{A,g}\colon \T \curvearrowright \OO_A$ considered in \cite{Matsumoto2021a}.

The map $(\sigma_A)^* \colon C(\X_A) \to C(\X_A)$ coincides with the map $\phi_A\colon \D_A \to \D_A$ given by $\phi_A(x) \coloneqq \sum_{i=1}^n s_i x s_i^*$ for $x \in \D_A$. This map $\phi_A$ appeared in Cuntz and Krieger's original paper as an invariant of one-sided conjugacy; cf.~\cite[Proposition~2.17]{CK80}. On the other hand, the map $(\sigma_A)_*\colon C(\X_A) \to C(\X_A)$ coincides with the adjacency operator $\lambda_A$ on $\D_A$, given by $\lambda_A(x) \coloneqq \sum_{i=1}^n s_i^* x s_i$ for $x \in \D_A$. \Cref{prop:sigma} shows that these maps can be used to characterise conjugacy.
\end{example}

\begin{example} \label{ex:oss}
Let $(\X,\sigma_\X)$ be a one-sided shift space, and let $(\widetilde{\X},\sigma_{\widetilde{\X}})$ be the cover of $(\X,\sigma_\X)$ constructed in \cite[Section~2.1]{BC20b}. It follows from \cite[Lemma~2.3]{BC20b} that $(\widetilde{\X},\sigma_{\widetilde{\X}})$ is a Deaconu--Renault system. If two one-sided shift spaces $(\X,\sigma_\X)$ and $(\Y,\sigma_\Y)$ are conjugate, then the corresponding Deaconu--Renault systems $(\widetilde{\X},\sigma_{\widetilde{\X}})$ and $(\widetilde{\Y},\sigma_{\widetilde{\Y}})$ are conjugate (see \cite[Lemma~4.1]{BC20b})l however, there are examples of non-conjugate one-sided shift spaces $(\X,\sigma_\X)$ and $(\Y,\sigma_\Y)$ for which $(\widetilde{\X},\sigma_{\widetilde{\X}})$ and $(\widetilde{\Y},\sigma_{\widetilde{\Y}})$ are conjugate (for example, consider a one-sided strictly sofic shift $(\X,\sigma_\X)$ and the one-sided edge shift $(\Y,\sigma_\Y)$ of its left Krieger cover, cf.~\cite[Exercise~6.1.9]{Kitchens}).

The Deaconu--Renault groupoid $\G_{\widetilde{\X}}$ of $(\widetilde{\X},\sigma_{\widetilde{\X}})$ is the groupoid $\G_\X$ described in \cite[Section~2.2]{BC20b}. It is shown in \cite{C04} that there is an isomorphism from $C^*(\G_{\widetilde{\X}})$ to the C*-algebra $\OO_\X$ studied in \cite{C08} that maps $C_0(\widetilde{\X})$ onto the C*-subalgebra $\D_\X$ and intertwines the canonical gauge actions of $C^*(\G_{\widetilde{\X}})$ and $\OO_\X$.
\end{example}

\begin{example}
Let $\X$ be a compact metrisable space, let $\phi\colon \X \to \X$ be a continuous locally injective surjection, and let $(\widehat{D},\psi)$ be the canonical extension of $(\X,\phi)$ constructed in \cite[Section~4]{Thomsen2011}. It follows from \cite[Proposition~4.1]{Thomsen2011} that $(\widehat{D},\psi)$ is a Deaconu--Renault system. If two continuous locally injective surjective maps $\phi\colon \X \to \X$ and $\phi'\colon \X' \to \X'$ between compact metrisable spaces are conjugate, then the corresponding Deaconu--Renault systems $(\widehat{D},\psi)$ and $(\widehat{D}',\psi')$ are conjugate (see \cite[Section~4]{Thomsen2011}), but there are, as in \cref{ex:oss}, examples of non-conjugate maps $\phi\colon \X \to \X$ and $\phi'\colon \X' \to \X'$ for which $(\widehat{D},\psi)$ and $(\widehat{D}',\psi')$ are conjugate.

The Deaconu--Renault groupoid $\G_{\widehat{D}}$ of $(\widehat{D},\psi)$ is the groupoid $\Gamma_\psi$ studied in \cite{Thomsen2011}. It therefore follows from \cite[Theorem~5.4]{Thomsen2011} that there is an isomorphism from $C^*(\G_{\widehat{D}})$ to the C*-algebra $C_r^*(\Gamma_\phi)$ constructed in \cite{Thomsen2010} that maps $C(\widehat{D})$ onto the C*-subalgebra $D_{\Gamma_\phi}$.
\end{example}

\section{Characterising conjugacy via groupoids and C*-algebras} \label{sec:characterisation}
In this section, we investigate the conditions that must be imposed on isomorphisms of Deaconu--Renault groupoids or their C*-algebras in order to ensure that the underlying Deaconu--Renault systems are conjugate. Our results are summarised in the following theorem.

\begin{theorem} \label{cor:summary}
Let $(\X,\sigma_\X)$ and $(\Y,\sigma_\Y)$ be second-countable Deaconu--Renault systems. The following statements are equivalent.
\begin{enumerate}[label=(\arabic*)]
\item \label{3.1(1)} The systems $(\X,\sigma_\X)$ and $(\Y,\sigma_\Y)$ are conjugate.

\item \label{3.1(2)} There exists a $*$-isomorphism $\varphi\colon C_0(\X) \to C_0(\Y)$ satisfying the following three equivalent conditions:
\begin{enumerate}[label=(\roman*)]
\item \label{3.1(2)(i)} there is a conjugacy $h\colon \X \to \Y$ satisfying $\varphi(f) = f\circ h^{-1}$ for $f\in C_0(\X)$;
\item \label{3.1(2)(ii)} $\varphi\big( (\sigma_\X)^*(f) g \big) = (\sigma_\Y)^*\big( \varphi(f) \big) \varphi(g)$ for all $f, g \in C_0(\X)$;
\item \label{3.1(2)(iii)} $\varphi\big(C_c(\domsigma{\X})\big) = C_c(\domsigma{\Y})$, and $\varphi \circ (\sigma_\X)_* = (\sigma_\Y)_* \circ \varphi\restr{C_c(\domsigma{\X})}$.
\end{enumerate}

\item \label{3.1(3)} There exists a groupoid isomorphism $\psi\colon \G_\X \to \G_\Y$ satisfying the following three equivalent conditions:
\begin{enumerate}[label=(\roman*)]
\item \label{3.1(3)(i)} there is a conjugacy $h\colon\X\to\Y$ such that $\psi(x,p,y)=(h(x),p,h(y))$ for $(x,p,y)\in\G_\X$;
\item \label{3.1(3)(ii)} $c_{g\circ \psi^{(0)}} = c_g \circ \psi$ for all $g \in C(\Y,\R)$;
\item \label{3.1(3)(iii)} there is a homeomorphism $h\colon\X\to\Y$ that satisfies $c_{g\circ h} = c_g \circ \psi$ for all $g \in C(\Y,\R)$.
\end{enumerate}

\item \label{3.1(4)} There is a $*$-isomorphism $\varphi\colon C^*(\G_\X) \to C^*(\G_\Y)$ satisfying the following two equivalent conditions:
\begin{enumerate}[label=(\roman*)]
\item \label{3.1(4)(i)} $\varphi(C_0(\X))= C_0(\Y)$, and there is a conjugacy $h\colon \X \to \Y$ such that $\varphi(f)=f\circ h^{-1}$ for $f\in C_0(\X)$ and $\varphi \circ \gamma_t^{\X, g\circ h} = \gamma_t^{\Y, g} \circ \varphi$ for all $t \in \R$ and $g \in C(\Y,\R)$;
\item \label{3.1(4)(ii)} there is a homeomorphism $h\colon \X \to \Y$ (which is not necessarily a conjugacy) such that $\varphi \circ \gamma_t^{\X, g\circ h} = \gamma_t^{\Y, g} \circ \varphi$ for all $t \in \R$ and $g \in C(\Y,\R)$.
\end{enumerate}
\end{enumerate}
\end{theorem}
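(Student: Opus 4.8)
The plan is to read \cref{cor:summary} as the common packaging of the three principal results \cref{prop:sigma}, \cref{prop:groupoid-conjugacy}, and \cref{prop:gauge-intertwining}: these supply the internal equivalences among the sub-conditions listed under \cref{3.1(2)}, \cref{3.1(3)}, and \cref{3.1(4)}, respectively. Granting those, it suffices to prove that \cref{3.1(1)} is equivalent to the first sub-condition of each of \cref{3.1(2)}, \cref{3.1(3)}, and \cref{3.1(4)}. Each of these first sub-conditions explicitly asserts the existence of a conjugacy $h$, so the implications back to \cref{3.1(1)} are immediate: one simply reads off $h$. Thus the only substantive work is the forward direction --- producing the three morphisms from a single conjugacy --- together with the one nontrivial internal equivalence discussed below.

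For the forward direction I would fix a conjugacy $h\colon\X\to\Y$ and build all three morphisms from it. For \cref{3.1(2)} I set $\varphi(f)\coloneqq f\circ h^{-1}$; this is a $*$-isomorphism $C_0(\X)\to C_0(\Y)$ because $h$ is a homeomorphism. For \cref{3.1(3)} I define $\psi(x,p,y)\coloneqq(h(x),p,h(y))$, and the content here is that $\psi$ is a well-defined groupoid isomorphism. This follows from \cref{lem:alternative-conjugacy}, since a conjugacy intertwines $\sigma_\X^m$ with $\sigma_\Y^m$ and matches their domains, so that $\sigma_\X^m(x)=\sigma_\X^n(y)$ holds precisely when $\sigma_\Y^m(h(x))=\sigma_\Y^n(h(y))$. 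A direct computation with the induced cocycles then gives $c_{g\circ h}=c_g\circ\psi$ for every $g\in C(\Y,\R)$. For \cref{3.1(4)} I promote $\psi$ to the induced $*$-isomorphism $\psi_*\colon C^*(\G_\X)\to C^*(\G_\Y)$, which restricts to $f\mapsto f\circ h^{-1}$ on the diagonal; the cocycle identity just obtained translates, via \cref{eq:action-induced-by-cocycle}, into the intertwining $\varphi\circ\gamma_t^{\X,g\circ h}=\gamma_t^{\Y,g}\circ\varphi$ required in condition (i) of \cref{3.1(4)}.

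The main obstacle is the internal implication from the second to the first sub-condition of \cref{3.1(4)}, namely \cref{prop:gauge-intertwining}. In that hypothesis one is given only a $*$-isomorphism $\varphi$ that intertwines the weighted gauge actions through some homeomorphism $h$ which is \emph{not} assumed to be a conjugacy, and, crucially, $\varphi$ is \emph{not} assumed to be diagonal-preserving; the task is to recover a conjugacy regardless. My plan is to feed this intertwining into \cref{lem:technical-lemma}, which is engineered precisely to convert the compatibility of the rich family of weighted gauge actions $\gamma_t^{\X,g\circ h}$ into cocycle data on the groupoids, yielding an induced groupoid isomorphism together with the identity $c_{g\circ h}=c_g\circ\psi$ appearing in the third sub-condition of \cref{3.1(3)}. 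Passing this back through \cref{prop:groupoid-conjugacy} then upgrades $h$ to a conjugacy and delivers the diagonal-preservation and the explicit formula demanded by condition (i) of \cref{3.1(4)}. I expect the difficulty to concentrate entirely in the technical lemma, since extracting groupoid-level structure from a $*$-isomorphism without the customary diagonal-preservation hypothesis is exactly the step that the standard reconstruction machinery does not directly provide.
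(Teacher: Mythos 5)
Your proposal is correct and follows essentially the same route as the paper, which likewise obtains \cref{cor:summary} directly from \cref{prop:sigma,prop:groupoid-conjugacy,prop:gauge-intertwining}: those propositions give the internal equivalences within \cref{3.1(2)}, \cref{3.1(3)}, and \cref{3.1(4)} as well as their equivalence with \cref{3.1(1)}, with the forward direction built from a single conjugacy exactly as you describe and the hard step concentrated in \cref{lem:technical-lemma}. The only point you gloss over is that in the proof of \cref{prop:gauge-intertwining}\cref{item:ii} one must first recover $\varphi(C_0(\X))=C_0(\Y)$ from \cref{lem:diagonal-fixed} (the diagonal as the common fixed-point algebra of the weighted actions) before \cref{lem:technical-lemma} can be applied, since that lemma assumes the diagonal is preserved.
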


\begin{remark}
\cref{cor:summary} follows from \cref{prop:sigma,prop:groupoid-conjugacy,prop:gauge-intertwining}, which we prove below (\cref{prop:sigma} gives us that the three conditions in \cref{cor:summary}\cref{3.1(2)} are equivalent, and that \cref{3.1(1),3.1(2)} are equivalent; \cref{prop:groupoid-conjugacy} gives us that the three conditions in \cref{cor:summary}\cref{3.1(3)} are equivalent, and that \cref{3.1(1),3.1(3)} are equivalent; and \cref{prop:gauge-intertwining} gives us that the two conditions in \cref{cor:summary}\cref{3.1(4)} are equivalent, and that \cref{3.1(1),3.1(4)} are equivalent).

It follows from \cref{prop:groupoid-conjugacy,prop:gauge-intertwining} that in \cref{3.1(3),3.1(4)} in \cref{cor:summary} we can replace $\R$ by any group that is \definitionemph{separating} for $\X$ and $\Y$, in the sense of \cref{def:separating}. In particular, if $\X$ and $\Y$ are totally disconnected, then we can replace $\R$ by $\Z$. We therefore obtain the following corollary, which is a generalisation and a strengthening of \cite[Theorem~1]{Matsumoto2021a}.
\end{remark}

\begin{corollary} \label{cor:graphs}
Let $E$ and $F$ be countable directed graphs. If there exist a homeomorphism $h\colon \partial E \to \partial F$ and a $*$-isomorphism $\varphi\colon C^*(E) \to C^*(F)$ satisfying
\[
\varphi\circ \gamma_z^{E, g\circ h} = \gamma_z^{F, g}\circ \varphi
\]
for all $z\in \T$ and $g\in C(\partial F, \Z)$, then the boundary-path spaces $(\partial E, \sigma_E)$ and $(\partial F, \sigma_F)$ are conjugate. Conversely, if $h\colon \partial E \to \partial F$ is a conjugacy, then there is a $*$-isomorphism $\varphi\colon C^*(E) \to C^*(F)$ satisfying $\varphi(\D(E))=\D(F)$, $\varphi(f)=f\circ h^{-1}$ for $f\in \D(E)$, and
\[
\varphi\circ \gamma_z^{E, g\circ h} = \gamma_z^{F, g}\circ \varphi
\]
for all $z\in \T$ and $g\in C(\partial F, \Z)$.
\end{corollary}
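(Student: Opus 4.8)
The plan is to obtain this corollary directly from \cref{cor:summary}, specifically from the equivalence of conditions \cref{3.1(1)} and \cref{3.1(4)}, by translating everything into the language of graph C*-algebras. Two ingredients make this possible. First, the boundary-path space $\partial E$ of a countable directed graph is a second-countable, totally disconnected, locally compact Hausdorff space, so $(\partial E, \sigma_E)$ is a second-countable Deaconu--Renault system to which \cref{cor:summary} applies; moreover, since $\partial E$ and $\partial F$ are totally disconnected, the remark following \cref{cor:summary} allows us to replace $\R$ by $\Z$ throughout \cref{3.1(4)}, so that the dual parameter ranges over $\widehat{\Z} = \T$ and the weight functions range over $C(\partial F, \Z)$. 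Second, as recorded in the examples above, there are $*$-isomorphisms $\Phi_E\colon C^*(E) \to C^*(\G_{\partial E})$ and $\Phi_F\colon C^*(F) \to C^*(\G_{\partial F})$ that carry $\D(E)$ onto $C_0(\partial E)$ and $\D(F)$ onto $C_0(\partial F)$, and that intertwine each generalised gauge action on the graph C*-algebra with the corresponding weighted action on the Deaconu--Renault C*-algebra.

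For the forward implication, I would start from the given $h$ and $\varphi$ and form the $*$-isomorphism $\tilde\varphi \coloneqq \Phi_F \circ \varphi \circ \Phi_E^{-1}\colon C^*(\G_{\partial E}) \to C^*(\G_{\partial F})$. A direct computation using the intertwining properties of $\Phi_E$ and $\Phi_F$ shows that the hypothesis $\varphi \circ \gamma_z^{E, g\circ h} = \gamma_z^{F, g} \circ \varphi$ is equivalent to $\tilde\varphi \circ \gamma_z^{\partial E, g\circ h} = \gamma_z^{\partial F, g} \circ \tilde\varphi$ for all $z \in \T$ and $g \in C(\partial F, \Z)$. This is precisely condition \cref{3.1(4)(ii)} of \cref{cor:summary} (with $\R$ replaced by $\Z$), so \cref{cor:summary} yields that $(\partial E, \sigma_E)$ and $(\partial F, \sigma_F)$ are conjugate.

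For the converse, given a conjugacy $h\colon \partial E \to \partial F$, I would invoke condition \cref{3.1(4)(i)} of \cref{cor:summary} (again with $\Z$ in place of $\R$) to obtain a $*$-isomorphism $\tilde\varphi\colon C^*(\G_{\partial E}) \to C^*(\G_{\partial F})$ satisfying $\tilde\varphi(C_0(\partial E)) = C_0(\partial F)$, $\tilde\varphi(f) = f \circ h^{-1}$ for $f \in C_0(\partial E)$, and $\tilde\varphi \circ \gamma_z^{\partial E, g\circ h} = \gamma_z^{\partial F, g} \circ \tilde\varphi$. Transporting back via $\varphi \coloneqq \Phi_F^{-1} \circ \tilde\varphi \circ \Phi_E$, and using that $\Phi_E$ and $\Phi_F$ restrict to the identifications $\D(E) \cong C_0(\partial E)$ and $\D(F) \cong C_0(\partial F)$, then produces a $*$-isomorphism $\varphi\colon C^*(E) \to C^*(F)$ with the three stated properties.

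The substance of the argument is therefore not in any new dynamics but in the bookkeeping of the two layers of identification, and I expect the only point requiring real care to be the compatibility of the weighted-action correspondence: one must confirm that, under $\Phi_E$, the generalised gauge action attached to the graph function determined by $g \circ h$ matches the weighted action $\gamma^{\partial E, g\circ h}$ on the groupoid algebra (and similarly for $F$), so that the intertwining relations transfer verbatim. Once this is checked against the example establishing the weighted-action/generalised-gauge-action correspondence, together with the justification (via the separating-group remark) that $\Z$ may replace $\R$ for these totally disconnected systems, both implications follow immediately.
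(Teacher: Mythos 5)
Your proposal is correct and follows essentially the same route as the paper, which obtains the corollary directly from \cref{cor:summary} (equivalently \cref{prop:gauge-intertwining}) with $\R$ replaced by $\Z$ — justified because the totally disconnected space $\partial E$ makes $\Z$ separating — combined with the standard identification of $C^*(E)$ with $C^*(\G_{\partial E})$ carrying $\D(E)$ onto $C_0(\partial E)$ and matching weighted actions with generalised gauge actions. The bookkeeping you flag as the only delicate point is exactly what the paper leaves implicit in its examples.
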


We now prove \cref{prop:sigma}, from which it follows that the three conditions \cref{3.1(2)(i),3.1(2)(ii),3.1(2)(iii)} in \cref{cor:summary}\cref{3.1(2)} are equivalent, and that \cref{3.1(1),3.1(2)} in \cref{cor:summary} are equivalent.

\begin{proposition} \label{prop:sigma}
Let $(\X,\sigma_\X)$ and $(\Y,\sigma_\Y)$ be Deaconu--Renault systems, and let $h\colon \X \to \Y$ be a homeomorphism. The map $\varphi\colon f \mapsto f \circ h^{-1}$ is a $*$-isomorphism from $C_0(\X)$ to $C_0(\Y)$, and the following statements are equivalent.
\begin{enumerate}[label=(\arabic*)]
\item \label{item:h-conjugacy} $h\colon \X \to \Y$ is a conjugacy.
\item \label{item:upper-star} For all $f, g \in C_0(\X)$,
\begin{equation} \label{eq:upper-star}
\varphi\big( (\sigma_\X)^*(f) g \big) = (\sigma_\Y)^*\big( \varphi(f) \big) \varphi(g).
\end{equation}
\item \label{item:lower-star} $\varphi\big(C_c(\domsigma{\X})\big) = C_c(\domsigma{\Y})$, and
\begin{equation} \label{eq:lower-star}
\varphi \circ (\sigma_\X)_* = (\sigma_\Y)_* \circ \varphi\restr{C_c(\domsigma{\X})}.
\end{equation}
\end{enumerate}
\end{proposition}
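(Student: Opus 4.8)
The plan is to first record that $\varphi\colon f \mapsto f \circ h^{-1}$ is a $*$-isomorphism (it is the pullback along the homeomorphism $h^{-1}$, with two-sided inverse $g \mapsto g \circ h$, and composition with a homeomorphism preserves continuity, vanishing at infinity, products, and conjugation), and then to prove the two equivalences \cref{item:h-conjugacy}$\iff$\cref{item:upper-star} and \cref{item:h-conjugacy}$\iff$\cref{item:lower-star} separately. In each case the strategy is the same: reduce the operator-algebraic identity to a pointwise statement and appeal to \cref{lem:alternative-conjugacy}.

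For \cref{item:h-conjugacy}$\Rightarrow$\cref{item:upper-star} I would evaluate both sides of \cref{eq:upper-star} at an arbitrary $y \in \Y$. Writing $x = h^{-1}(y)$, the left-hand side equals $f(\sigma_\X(x))\,g(x)$ when $x \in \domsigma{\X}$ and $0$ otherwise, while the right-hand side equals $f(h^{-1}(\sigma_\Y(y)))\,g(x)$ when $y \in \domsigma{\Y}$ and $0$ otherwise; the domain equality $h(\domsigma{\X}) = \domsigma{\Y}$ together with the relation $\sigma_\X(h^{-1}(y)) = h^{-1}(\sigma_\Y(y))$ supplied by \cref{lem:alternative-conjugacy}\cref{item:domain} makes these coincide. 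For the converse \cref{item:upper-star}$\Rightarrow$\cref{item:h-conjugacy} I would run this backwards with test functions: choosing $f, g \in C_0(\X)$ peaked at suitable points first forces both domain inclusions (hence $h(\domsigma{\X}) = \domsigma{\Y}$), and then, after cancelling a factor $g(x) \ne 0$, yields $f(\sigma_\X(x)) = f(h^{-1}(\sigma_\Y(h(x))))$ for every $f \in C_0(\X)$ and every $x \in \domsigma{\X}$. Since $C_0(\X)$ separates the points of the locally compact Hausdorff space $\X$, this gives $h \circ \sigma_\X = \sigma_\Y \circ h$ on $\domsigma{\X}$, and \cref{lem:alternative-conjugacy}\cref{item:domain} then yields that $h$ is a conjugacy.

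For the equivalence with \cref{item:lower-star} I would first note that, as $h$ is a homeomorphism, $\varphi$ carries $C_c(\domsigma{\X})$ onto $C_c(h(\domsigma{\X}))$, so $\varphi(C_c(\domsigma{\X})) = C_c(\domsigma{\Y})$ is equivalent to $h(\domsigma{\X}) = \domsigma{\Y}$ (two spaces of compactly supported functions over open sets agree exactly when the open sets do). Assuming this, I would evaluate \cref{eq:lower-star} at $y \in \Y$: the left-hand side is $\sum_{z \in \sigma_\X^{-1}(h^{-1}(y))} f(z)$ and the right-hand side is $\sum_{z \in h^{-1}(\sigma_\Y^{-1}(y))} f(z)$. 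Because $\sigma_\X$ is a local homeomorphism and $h$ a homeomorphism, both index sets are discrete and closed in $\domsigma{\X}$ with every point occurring with multiplicity one, so the two sums agree for all $f \in C_c(\domsigma{\X})$ if and only if $\sigma_\X^{-1}(h^{-1}(y)) = h^{-1}(\sigma_\Y^{-1}(y))$. This is precisely the inverse-image condition of \cref{lem:alternative-conjugacy}\cref{item:inverse-images}, giving \cref{item:lower-star}$\iff$\cref{item:h-conjugacy}.

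The main obstacle I anticipate is in the converse directions, where the algebraic identities must be shown to encode the \emph{domain} matching $h(\domsigma{\X}) = \domsigma{\Y}$ and not merely the intertwining of the partial maps—this is exactly the subtlety exhibited by \cref{eg:complex-circle}, where a homeomorphism intertwining the dynamics on an empty domain is not a conjugacy. The other delicate point is the test-function argument for \cref{item:lower-star}$\Rightarrow$\cref{item:h-conjugacy}: since the fibres $\sigma_\X^{-1}(h^{-1}(y))$ may be infinite, I would use the local-homeomorphism property to isolate a single competing preimage point $z_0$ in a neighbourhood meeting its own fibre only at $z_0$ and disjoint from the closed rival set, and then take $f \in C_c(\domsigma{\X})$ supported there with $f(z_0) = 1$ to separate the two index sets.
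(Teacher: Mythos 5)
Your proposal is correct and follows essentially the same route as the paper's proof: reduce each operator identity to a pointwise statement, use Urysohn-type test functions for the converse directions, and invoke \cref{lem:alternative-conjugacy} to upgrade the resulting domain equality and intertwining (or fibre equality) to a conjugacy. The only differences are cosmetic—the paper obtains the second half of each conjugacy condition by exploiting the symmetry of the hypotheses under $\varphi \leftrightarrow \varphi^{-1}$ rather than via \cref{lem:alternative-conjugacy}\cref{item:domain}, and in the \cref{item:lower-star}$\Rightarrow$\cref{item:h-conjugacy} step it separates the two fibres with a nonnegative function vanishing on the rival set instead of isolating a single point, but both devices accomplish the same thing.
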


\begin{proof}
A routine argument shows that the map $\varphi\colon f \mapsto f \circ h^{-1}$ is a $*$-isomorphism from $C_0(\X)$ to $C_0(\Y)$.

\cref{item:h-conjugacy}$\implies$\big(\cref{item:upper-star}~and~\cref{item:lower-star}\big): Assume first that $h\colon \X \to \Y$ is a conjugacy. It follows from \cref{lem:alternative-conjugacy} that $h(\domsigma{\X}) = \domsigma{\Y}$, and hence $\varphi\big(C_c(\domsigma{\X})\big) = C_c(\domsigma{\Y})$.

Fix $f, g \in C_0(\X)$. Since $h$ is a conjugacy, we have
\[
f\big(\sigma_\X(h^{-1}(y))\big) \, g(h^{-1}(y)) = f\big(h^{-1}(\sigma_\Y(y))\big) \, g(h^{-1}(y))
\]
for all $y \in \domsigma{\Y}$, and \cref{eq:upper-star} follows. By \cref{lem:alternative-conjugacy}, we have $\sigma_\X^{-1}(h^{-1}(y)) = h^{-1}(\sigma_\Y^{-1}(y))$ for all $y \in \Y$, and hence
\[
\varphi\big((\sigma_\X)_*(f)\big)(y) \,=\, \sum_{z \in \sigma_\X^{-1}(h^{-1}(y))} f(z) \,=\, \sum_{w \in \sigma_\Y^{-1}(y)} f(h^{-1}(w)) \,=\, (\sigma_\Y)_*\big(\varphi(f)\big)(y),
\]
and so \cref{eq:lower-star} holds.

\cref{item:upper-star}$\implies$\cref{item:h-conjugacy}: Suppose that \cref{eq:upper-star} holds. Fix $y \in \domsigma{\Y}$. We claim that $\sigma_\X(h^{-1}(y)) = h^{-1}(\sigma_\Y(y))$. To see this, first choose $g \in C_0(\X)$ such that $g(h^{-1}(y)) = 1$. Then for all $f \in C_0(\X)$, we have
\[
f\big(\sigma_\X(h^{-1}(y))\big) = \varphi\big( (\sigma_\X)^*(f) g \big) = (\sigma_\Y)^*\big( \varphi(f) \big) \varphi(g) = f\big(h^{-1}(\sigma_\Y(y))\big),
\]
and so $\sigma_\X(h^{-1}(y)) = h^{-1}(\sigma_\Y(y))$, as claimed. Since the assumption of \cref{eq:upper-star} is equivalent to the assumption that
\[
\varphi^{-1}\big((\sigma_\Y)^*(f')g'\big) = (\sigma_\X)^*\big(\varphi^{-1}(f')\big) \varphi^{-1}(g')
\]
for all $f', g' \in C_0(\Y)$, a similar argument shows that $\sigma_\Y(h(x)) = h(\sigma_\X(x))$ for all $x \in \domsigma{\X}$. Therefore, $h$ is a conjugacy.

\cref{item:lower-star}$\implies$\cref{item:h-conjugacy}: Suppose that $\varphi\big(C_c(\domsigma{\X})\big) = C_c(\domsigma{\Y})$ and that \cref{eq:lower-star} holds. We will use the implication \cref{item:inverse-images}$\implies$\cref{item:conjugacy} of \cref{lem:alternative-conjugacy} to show that $h$ is a conjugacy. Fix $y \in \Y$. By \cref{eq:lower-star}, we have
\begin{equation} \label{eq:sigma_*}
\sum_{z \in \sigma_\X^{-1}(h^{-1}(y))} f(z) \,=\, \sum_{w \in \sigma_\Y^{-1}(y)} f(h^{-1}(w)) \,=\, \sum_{t \in h^{-1}(\sigma_\Y^{-1}(y))} f(t),
\end{equation}
for all $f \in C_c(\domsigma{\X})$. Suppose for contradiction that $h^{-1}(\sigma_\Y^{-1}(y)) \ne \sigma_\X^{-1}(h^{-1}(y))$. Then there exists $x \in \sigma_\X^{-1}(h^{-1}(y))$ such that $x \notin h^{-1}(\sigma_\Y^{-1}(y))$. Since $\X$ is a locally compact Hausdorff space, it is regular, and thus since $h^{-1}(\sigma_\Y^{-1}(y))$ is closed, there is an open neighbourhood $U \subset \X$ of $x$ such that $U \cap h^{-1}(\sigma_\Y^{-1}(y)) = \varnothing$. By Urysohn's lemma, there exists a function $f \in C_c(\X, [0,1])$ such that $f(x) = 1$ and $f(z) = 0$ for all $z \in h^{-1}(\sigma_\Y^{-1}(y))$. But this contradicts \cref{eq:sigma_*}, and so we must have $h^{-1}(\sigma_\Y^{-1}(y)) = \sigma_\X^{-1}(h^{-1}(y))$. Since the assumption of \cref{eq:lower-star} is equivalent to the assumption that
\[
\varphi^{-1} \circ (\sigma_\Y)_* = (\sigma_\X)_* \circ \varphi^{-1}\restr{C_c(\domsigma{\Y})},
\]
a similar argument shows that $h(\sigma_\X^{-1}(x)) = \sigma_\Y^{-1}(h(x))$ for all $x \in \X$. Therefore, \cref{lem:alternative-conjugacy} implies that $h$ is a conjugacy.
\end{proof}

We now introduce the notion of a \definitionemph{separating} group, which we use in \cref{prop:groupoid-conjugacy,prop:gauge-intertwining}.

\begin{definition} \label{def:separating}
Let $\X$ be a locally compact Hausdorff space, and let $\Gamma$ be a locally compact group with identity element $\id_\Gamma$. We say that $\Gamma$ is \definitionemph{separating} for $\X$ if, for any finite set $F \subset \X$ and $x \in \X {\setminus} F$, there exists $f \in C(\X, \Gamma)$ such that $f(x)$ has infinite order in $\Gamma$ and $f\restr{F} \equiv \id_\Gamma$.
\end{definition}

\begin{example}
Urysohn's lemma for locally compact Hausdorff spaces ensures that $\R$ is separating for any locally compact Hausdorff space $\X$. If $\X$ is totally disconnected, then $\Z$ is separating for $\X$.
\end{example}

\begin{lemma}\label{lem:separating}
Let $(\X,\sigma_\X)$ be a Deaconu--Renault system and suppose that $\Gamma$ is a locally compact group that is separating for $\X$. If $k,l\in \N$ and
\begin{equation} \label{eq:separating-sums}
\sum_{i=0}^k f(a_i) = \sum_{j=0}^l f(b_j)
\end{equation}
for some (not necessarily distinct) elements $a_0,\dotsc,a_k,b_0,\dotsc,b_l\in \X$ and all $f \in C(\X, \Gamma)$, then $k = l$ and $\{ a_0, \dotsc, a_k \} = \{ b_0, \dotsc, b_k \}$. Moreover, if $x, x' \in \X$ satisfy $\sigma_\X^k(x) = \sigma_\X^l(x')$ and $f^{(k)}(x) = f^{(l)}(x')$ for all $f\in C(\X, \Gamma)$, then $k = l$ and $x=x'$.
\end{lemma}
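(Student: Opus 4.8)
The plan is to extract everything from the separating hypothesis by a pointwise ``probing'' argument, and then to bootstrap the second assertion from the first.

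For the first assertion I would fix an enumeration $x_1,\dots,x_r$ of the distinct points occurring among $a_0,\dots,a_k,b_0,\dots,b_l$, and for each $t$ let $m_t$ (respectively $n_t$) be the number of indices $i$ with $a_i = x_t$ (respectively $j$ with $b_j = x_t$). The goal is to show $m_t = n_t$ for every $t$: this forces the multiset, hence set, equality $\{a_0,\dots,a_k\}=\{b_0,\dots,b_l\}$, and summing multiplicities gives $k+1=\sum_t m_t=\sum_t n_t=l+1$, so $k=l$. To isolate a single $m_t$, apply the separating property to $F=\{x_s:s\neq t\}$ and the point $x_t\notin F$, obtaining $f\in C(\X,\Gamma)$ with $f\restr{F}\equiv\id_\Gamma$ and $f(x_t)$ of infinite order. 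Substituting this $f$ into \cref{eq:separating-sums} kills every term indexed by a point other than $x_t$, collapsing the left side to $m_t\,f(x_t)$ and the right side to $n_t\,f(x_t)$ (only powers of the single element $f(x_t)$ appear, so commutativity of $\Gamma$ is not needed). Since $f(x_t)$ has infinite order, $m_t\,f(x_t)=n_t\,f(x_t)$ yields $m_t=n_t$.

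For the second assertion, suppose $\sigma_\X^k(x)=\sigma_\X^l(x')$ and $f^{(k)}(x)=f^{(l)}(x')$ for all $f$. I would reduce to the first assertion by appending the common endpoint: since $\sigma_\X^k(x)=\sigma_\X^l(x')$ is one point, adding $f(\sigma_\X^k(x))=f(\sigma_\X^l(x'))$ to both sides of $\sum_{i=0}^{k-1}f(\sigma_\X^i(x))=\sum_{j=0}^{l-1}f(\sigma_\X^j(x'))$ produces $\sum_{i=0}^{k}f(\sigma_\X^i(x))=\sum_{j=0}^{l}f(\sigma_\X^j(x'))$ for all $f\in C(\X,\Gamma)$. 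Applying the first assertion with $a_i=\sigma_\X^i(x)$ and $b_j=\sigma_\X^j(x')$ then gives $k=l$ together with $\{\sigma_\X^i(x):0\le i\le k\}=\{\sigma_\X^j(x'):0\le j\le k\}$.

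It remains to upgrade this to $x=x'$, and this is where I expect the real work to lie, since the set (even multiset) equality alone is insufficient: for a point on a periodic orbit the two orbit segments can agree as sets while $x\neq x'$, and only the endpoint hypothesis $\sigma_\X^k(x)=\sigma_\X^k(x')$ excludes this. Set membership gives $x=\sigma_\X^j(x')$ and $x'=\sigma_\X^i(x)$ for some $0\le i,j\le k$, whence $x=\sigma_\X^{i+j}(x)$. If $i+j=0$ then $i=j=0$ and $x=x'$. Otherwise $x$ is periodic, and I would use the standard fact that a periodic point has a minimal period $p_0\ge 1$ with $\sigma_\X^a(x)=\sigma_\X^b(x)$ if and only if $a\equiv b\pmod{p_0}$ for $a,b\ge 0$; the endpoint hypothesis, rewritten as $\sigma_\X^k(x)=\sigma_\X^k(x')=\sigma_\X^{k+i}(x)$, then forces $k\equiv k+i\pmod{p_0}$, i.e.\ $p_0\mid i$, so $x'=\sigma_\X^i(x)=x$. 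The only delicate point is justifying this cycle structure when $\sigma_\X$ is merely locally (not globally) injective; I would dispatch it by the elementary observation that any coincidence $\sigma_\X^a(x)=\sigma_\X^b(x)$ with $a>b$ produces a strictly smaller period after applying $\sigma_\X^{\,p-a}$, so the minimal period behaves as expected.
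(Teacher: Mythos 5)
Your proof is correct and follows essentially the same route as the paper's: probe \cref{eq:separating-sums} with separating functions that are trivial on all but one point to match multiplicities, then feed the orbit segments into the first assertion and combine the resulting relations $x=\sigma_\X^j(x')$, $x'=\sigma_\X^i(x)$ with the endpoint identity $\sigma_\X^k(x)=\sigma_\X^k(x')$ to force $x=x'$. The only divergence is the very last step, where the paper sidesteps minimal periods and the cycle-structure lemma you invoke by simply choosing $n$ with $n(i+j)\ge k$ and computing $x=\sigma_\X^{n(i+j)}(x)=\sigma_\X^{n(i+j)-k}\big(\sigma_\X^k(x')\big)=x'$; your endpoint-appending reduction and your divisibility argument are both valid (and the former neatly covers the degenerate cases $k=0$ or $l=0$), just marginally less economical.
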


\begin{proof}
Let $A \coloneqq \{ a_0, \dotsc, a_k \}$ and $B \coloneqq \{ b_0, \dotsc, b_l \}$. For $x \in \X$, we may choose $f\in C(\X, \Gamma)$ such that $f(x)$ has infinite order and $f\restr{(A\cup B)\setminus \{x\}} \equiv \id_\Gamma$. By~\cref{eq:separating-sums}, we then have that
\[
\left\lvert \big\{i \in \{0,\dotsc,k\} : a_i = x \big\} \right\rvert = \left\lvert \big\{ j \in \{0,\dotsc,l\} : b_j = x \big\} \right\rvert.
\]
By applying this observation for all $x \in \X$, we see that $k = l$ and $A = B$.

For the second part, the hypothesis that $f^{(k)}(x) = f^{(l)}(x')$ for all $f \in C(\X, \Gamma)$ means that
\[
\sum_{i=0}^{k-1} f(\sigma_\X^i(x)) = \sum_{j=0}^{l-1} f(\sigma_\X^j(x'))
\]
for all $f\in C(\X, \Gamma)$. It follows from the first part that $k = l$ and that
\[
\{ \sigma_\X^i(x) : i = 0,\dotsc,k-1\} = \{ \sigma_\X^j(x') : j= 0,\dotsc,k-1\}.
\]
This means that $x = \sigma_\X^p(x')$ and $x' = \sigma_\X^q(x)$ for some $p,q\in \N$. By choosing $n \in \N_+$ such that $k \le n(p+q)$, the fact that $\sigma_\X^k(x) = \sigma_\X^k(x')$ implies that
\[
x = \sigma_\X^{n(p+q)}(x) = \sigma_\X^{n(p+q)}(x') = x',
\]
as required.
\end{proof}

We now prove \cref{prop:groupoid-conjugacy}, from which it follows that the three conditions \cref{3.1(3)(i),3.1(3)(ii),3.1(3)(iii)} in \cref{cor:summary}\cref{3.1(3)} are equivalent, and that \cref{3.1(1),3.1(3)} in \cref{cor:summary} are equivalent.

\begin{proposition} \label{prop:groupoid-conjugacy}
Let $(\X,\sigma_\X)$ and $(\Y,\sigma_\Y)$ be Deaconu--Renault systems.
A conjugacy $h\colon \X \to \Y$ induces a groupoid isomorphism $\psi\colon \G_\X \to \G_\Y$ satisfying
\[
\psi(x, p, y) = (h(x), p, h(y)),
\]
for $(x,p,y)\in \G_\X$.
Moreover, if $\psi\colon \G_\X \to \G_\Y$ is a groupoid isomorphism and $\Gamma$ is a locally compact group that is separating for $\Y$, then the following three conditions are equivalent.
\begin{enumerate}[label=(\arabic*)]
\item \label{3.8(1)} There is a conjugacy $h\colon \X \to \Y$ such that $\psi(x, p, y) = (h(x), p, h(y))$ for $(x,p,y)\in \G_\X$.
\item \label{3.8(2)} $c_{g\circ \psi^{(0)}} = c_g \circ \psi$ for $g \in C(\Y, \Gamma)$.
\item \label{3.8(3)} There is a homeomorphism $h\colon \X \to \Y$ that satisfies $c_{g \circ h} = c_g \circ \psi$ for all $g \in C(\Y, \Gamma)$.
\end{enumerate}
\end{proposition}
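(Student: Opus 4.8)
The plan is to prove the opening claim directly and then to run the equivalences around the cycle \cref{3.8(1)}$\Rightarrow$\cref{3.8(2)}$\Rightarrow$\cref{3.8(3)}$\Rightarrow$\cref{3.8(1)}, with the last implication carrying essentially all of the work. For the opening claim, given a conjugacy $h$ I would set $\psi(x,p,y) \coloneqq (h(x),p,h(y))$ and check the groupoid-isomorphism axioms. First I would record, by induction on $k$ using \cref{lem:alternative-conjugacy}\cref{item:domain}, that $h(\domsigmak{\X}{k}) = \domsigmak{\Y}{k}$ and $h\circ\sigma_\X^k = \sigma_\Y^k\circ h$ for all $k$; this makes $\psi$ well defined into $\G_\Y$, since $\sigma_\X^m(x)=\sigma_\X^n(y)$ then forces $\sigma_\Y^m(h(x))=\sigma_\Y^n(h(y))$. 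Multiplicativity and compatibility with inversion are immediate from the formula, bijectivity holds because $h^{-1}$ is also a conjugacy and induces the inverse map, and $\psi$ is a homeomorphism because it carries each basic open set $Z(U,m,n,V)$ onto $Z(h(U),m,n,h(V))$ (and symmetrically for $\psi^{-1}$).

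For \cref{3.8(1)}$\Rightarrow$\cref{3.8(2)}, the hypothesis gives $\psi^{(0)}=h$; evaluating on $(x,m-n,y)\in\G_\X$ and using $h\circ\sigma_\X^i=\sigma_\Y^i\circ h$, both $c_{g\circ\psi^{(0)}}(x,m-n,y)$ and $(c_g\circ\psi)(x,m-n,y)$ collapse to $g^{(m)}(h(x))-g^{(n)}(h(y))$, so they agree. The implication \cref{3.8(2)}$\Rightarrow$\cref{3.8(3)} is trivial: take $h\coloneqq\psi^{(0)}$, which is a homeomorphism since $\psi$ is a groupoid isomorphism.

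The substantial step is \cref{3.8(3)}$\Rightarrow$\cref{3.8(1)}. Write $\phi\coloneqq\psi^{(0)}$; a groupoid isomorphism automatically satisfies $\psi(x,p,y)=(\phi(x),c_\Y(\psi(x,p,y)),\phi(y))$, so the first task is to show $c_\Y\circ\psi=c_\X$, i.e.\ that the canonical grading is preserved. Fixing $(x,m-n,y)\in\G_\X$ and writing $\psi(x,m-n,y)=(\phi(x),m'-n',\phi(y))$ for a valid representation in $\G_\Y$, the identity $c_{g\circ h}=c_g\circ\psi$ becomes an equality of two sums consisting of $m+n'$ and $m'+n$ values of $g$; since $\Gamma$ is separating for $\Y$, \cref{lem:separating} forces $m+n'=m'+n$, hence $m-n=m'-n'$ and $\psi(x,p,y)=(\phi(x),p,\phi(y))$. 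I would also note that the hypothesis is symmetric: substituting $g=f\circ h^{-1}$ gives $c_{f\circ h^{-1}}=c_f\circ\psi^{-1}$, the same condition for the pair $(\psi^{-1},h^{-1})$. Applying the foregoing to both $\psi$ and $\psi^{-1}$ yields the domain equality $\phi(\domsigma{\X})=\domsigma{\Y}$, because for $x\in\domsigma{\X}$ the element $(\phi(x),1,\phi(\sigma_\X(x)))\in\G_\Y$ has a representation with first index at least $1$, forcing $\phi(x)\in\domsigma{\Y}$.

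To finish I would prove $\phi\circ\sigma_\X=\sigma_\Y\circ\phi$ on $\domsigma{\X}$, after which \cref{lem:alternative-conjugacy}\cref{item:domain} gives that $\phi$ is a conjugacy and hence \cref{3.8(1)} holds. For $x\in\domsigma{\X}$ I would evaluate the hypothesis on $(x,1,\sigma_\X(x))$: its image $(\phi(x),1,\phi(\sigma_\X(x)))$ has a minimal representation $(m',m'-1)$ with $\sigma_\Y^{m'}(\phi(x))=\sigma_\Y^{m'-1}(\phi(\sigma_\X(x)))$, and the cocycle identity reads $g(h(x))=g^{(m')}(\phi(x))-g^{(m'-1)}(\phi(\sigma_\X(x)))$ for all $g$. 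Rewriting this as an equality of two sums of $m'$ values of $g$ and invoking \cref{lem:separating} produces a multiset equation among forward-orbit points of $\phi(x)$ and $\phi(\sigma_\X(x))$, from which I would extract $\phi(\sigma_\X(x))=\sigma_\Y(\phi(x))$. The main obstacle is controlling this multiset equation when $\phi(x)$ is eventually periodic, where $m'$ may exceed $1$ and naive index matching breaks down; here I would use that $\psi$ preserves the orbit relation and the isotropy structure (so, for instance, a $\sigma_\X$-fixed point, whose orbit is a single unit, must map to a $\sigma_\Y$-fixed point) together with the defining relation $\sigma_\Y^{m'}(\phi(x))=\sigma_\Y^{m'-1}(\phi(\sigma_\X(x)))$ and a congruence argument modulo the period to pin down the representation length. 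The aperiodic case is immediate, since then all forward-orbit points are distinct and the two multisets must match index-by-index.
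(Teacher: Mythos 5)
Your architecture matches the paper's: the routine construction of $\psi$ from a conjugacy, the cycle \cref{3.8(1)}$\Rightarrow$\cref{3.8(2)}$\Rightarrow$\cref{3.8(3)}$\Rightarrow$\cref{3.8(1)}, and, in the hard implication, the use of \cref{lem:separating} first to show that $\psi$ intertwines the canonical cocycles (your count $m+n'=m'+n$ is a fine alternative to the paper's trick of taking $g$ constantly equal to an element of infinite order) and then to convert the cocycle identity at $(x,1,\sigma_\X(x))$ into a multiset equation. Up to and including the aperiodic case your argument is sound. The gap is in the eventually periodic case, which is exactly where the bulk of the paper's proof lives.

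Concretely: writing $\tilde{h}\coloneqq\psi^{(0)}$, the multiset equation you obtain is
\[
\big\{\sigma_\Y^i(\tilde{h}(x)) : 0\le i\le m'-1\big\} \;=\; \{h(x)\}\cup\big\{\sigma_\Y^j(\tilde{h}(\sigma_\X(x))) : 0\le j\le m'-2\big\},
\]
and the troublesome element is $h(x)$, which comes from the \emph{other} homeomorphism and is a priori unrelated to the forward orbit of $\tilde{h}(x)$. A purely local analysis at the single element $(x,1,\sigma_\X(x))$ cannot force $m'=1$: for instance, if $\tilde{h}(\sigma_\X(x))=\tilde{h}(x)=u$ with $\sigma_\Y(u)=p$ a fixed point, $u\ne p$, and $h(x)=p$, then $m'=2$, the multiset equation holds, and the minimality of $m'$ is respected. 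Your proposed tools do not exclude this configuration: preservation of orbits and isotropy only shows that a $\sigma_\X$-fixed point maps to an \emph{eventually} fixed point (i.e.\ $\sigma_\Y^{m'}(\tilde{h}(x))=\sigma_\Y^{m'-1}(\tilde{h}(x))$), not to a fixed point, so your parenthetical claim assumes essentially what has to be proved; and a congruence modulo the period does not tell you where $h(x)$ sits. The paper's resolution is global rather than local: an induction on the minimal $n$ such that $\sigma_\Y^n(\tilde{h}(x))$ is periodic, whose key step --- in the case where the matching gives $\sigma_\Y^{i+1}(\tilde{h}(x))=h(x)$ --- chooses $x'$ with $\tilde{h}(x')=h(x)$, applies the inductive hypothesis to get $\tilde{k}(x')=0$, deduces from the cocycle identity at $(x',1,\sigma_\X(x'))$ that $h(x')=\tilde{h}(x')$, hence $x=x'$ by injectivity of $h$, and so contradicts the minimality of $n$. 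Some mechanism of this kind, relating $h$ and $\tilde{h}$ at \emph{other} points of $\X$, is unavoidable, and it is missing from your proposal.
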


\begin{proof}
A routine argument shows that if $h\colon \X \to \Y$ is a conjugacy, then the map $\psi\colon \G_\X \to \G_\Y$ given by $\psi(x, p, y) = (h(x), p, h(y))$ is a groupoid isomorphism.

We now prove the implication $\cref{3.8(1)}\implies\cref{3.8(2)}$. Suppose that condition~$\cref{3.8(1)}$ holds. Let $\Gamma$ be a locally compact group, and fix $g \in C(\Y, \Gamma)$. We claim that $c_{g \circ h} = c_g \circ \psi$. It suffices to prove the relation for groupoid elements in $Z(\X, 1, 0, \sigma_\X(\X)) \subset \G_\X$, so fix $x \in \domsigma{\X}$. Then
\[
c_{g\circ h}\big(x, 1, \sigma_\X(x)\big) = g(h(x)) = c_g\big(h(x), 1, \sigma_\Y(h(x))\big) = c_g\big(\psi(x, 1, \sigma_\X(x))\big),
\]
which proves the claim. The implication $\cref{3.8(2)}\implies\cref{3.8(3)}$ is obvious.

It remains to prove $\cref{3.8(3)}\implies\cref{3.8(1)}$. Suppose that $\psi\colon \G_\X \to \G_\Y$ is a groupoid isomorphism and that $h\colon \X \to \Y$ is a homeomorphism such that $c_{g \circ h} = c_g \circ \psi$ for all $g \in C(\Y, \Gamma)$. Consider the homeomorphism $\tilde{h} \coloneqq \psi^{(0)}$ and note that $h$ and $\tilde{h}$ need not be equal. We will show that $\tilde{h}$ is a conjugacy.

Since $\Gamma$ is separating for $\Y$, it contains an element of infinite order, and this element generates a copy of $\Z$ in $\Gamma$. By choosing $g\in C(\Y, \Gamma)$ to be constantly equal to such an element, we see that $\psi$ intertwines the canonical cocycles. Therefore,
\[
\psi(x, p, y) = (\tilde{h}(x), p, \tilde{h}(y))
\]
for $(x,p,y)\in \G_\X$, and it follows from~\cite[Theorem~8.10]{CRST} that $\tilde{h}$ and $\tilde{h}^{-1}$ are eventual conjugacies (cf.~\cite[Definition~8.9]{CRST}). In particular, $\tilde{h}(\domsigma{\X})=\domsigma{\Y}$. For $x\in \domsigma{\X}$, we let $\tilde{k}(x)$ be the minimal nonnegative integer satisfying
\begin{equation}\label{eq:k}
\big(\sigma_\Y^{\tilde{k}(x)+1}\circ\tilde{h}\big)(x) = \big(\sigma_\Y^{\tilde{k}(x)}\circ\tilde{h}\circ\sigma_\X\big)(x),
\end{equation}
noting that such an integer exists because $\tilde{h}$ is an eventual conjugacy. By~\cref{lem:alternative-conjugacy} it only remains to show that $\tilde{h}(\sigma_\X(x))=\sigma_\Y(\tilde{h}(x))$ for $x\in \domsigma{\X}$. This is equivalent to showing that $\tilde{k}(x) = 0$ for $x\in \domsigma{\X}$.

Fix $x\in \domsigma{\X}$, and suppose for contradiction that $\tilde{k}(x)>0$. The hypothesis $c_{g \circ h} = c_g \circ \psi$ implies that
\begin{equation} \label{eq:dagger}
g(h(x)) = c_g\big(\psi(x,1,\sigma_\X(x))\big) = \sum_{i=0}^{\tilde{k}(x)} g\big( (\sigma_\Y^i \circ \tilde{h})(x) \big) - \sum_{j=0}^{\tilde{k}(x)-1} g\big( (\sigma_\Y^j \circ \tilde{h} \circ \sigma_\X)(x) \big),
\end{equation}
for any $g\in C(\Y, \Gamma)$. Since $\Gamma$ is separating for $\Y$, it follows from~\cref{lem:separating} that the sets
\[
A \coloneqq \big\{ (\sigma_\Y^i \circ \tilde{h})(x) : i = 0,\dotsc, \tilde{k}(x) \big\} \, \text{ and } \, B \coloneqq \big\{ h(x), \, (\sigma_\Y^j \circ \tilde{h} \circ \sigma_\X)(x) : j = 0,\dotsc, \tilde{k}(x)-1 \big\}
\]
are equal. Therefore, there exists $i \in \{0,\dotsc, \tilde{k}(x)\}$ such that
\begin{equation} \label{eq:i}
\big(\sigma_\Y^i \circ \tilde{h}\big)(x) = \big(\sigma_\Y^{\tilde{k}(x)-1} \circ \tilde{h} \circ \sigma_\X\big)(x).
\end{equation}
If $i = \tilde{k}(x)$, then~\cref{eq:i} contradicts the minimality of $\tilde{k}(x)$, so we must have $i < \tilde{k}(x)$.

We will now show that $\tilde{k}(x)=0$. Let us first consider the case when $\tilde{h}(x)$ is aperiodic, i.e.~there is no pair of distinct nonnegative integers $m$ and $n$ such that $(\sigma_\Y^m \circ \tilde{h})(x) = (\sigma_\Y^n \circ \tilde{h})(x)$. If $\tilde{k}(x) > 0$, then
\[
\big(\sigma_\Y^{\tilde{k}(x)+1}\circ\tilde{h}\big)(x) = \big(\sigma_\Y^{\tilde{k}(x)}\circ\tilde{h}\circ\sigma_\X\big)(x) = \big(\sigma_\Y^{i+1}\circ\tilde{h}\big)(x),
\]
and this together with the assumption that $\tilde{h}(x)$ is aperiodic implies that $i = \tilde{k}(x)$, which we have already seen cannot be the case. Therefore, $\tilde{k}(x) = 0$ when $\tilde{h}(x)$ is aperiodic.

We now consider the case when $\tilde{h}(x)$ is eventually periodic. In this case, there is a nonnegative integer $n$ such that $(\sigma_\Y^n\circ \tilde{h})(x)$ is periodic, and we proceed by induction on $n$. For $n = 0$ (i.e.~$\tilde{h}(x)$ is periodic), we choose $i$ as in \cref{eq:i}, and observe that $\big(\sigma_\Y^i \circ \tilde{h}\big)(x) = \big(\sigma_\Y^{\tilde{k}(x)-1} \circ \tilde{h} \circ \sigma_\X\big)(x)$ is periodic.

If $p$ is a period of $\big(\sigma_\Y^i \circ \tilde{h}\big)(x) = \big(\sigma_\Y^{\tilde{k}(x)-1} \circ \tilde{h} \circ \sigma_\X\big)(x)$, then the eventual conjugacy condition~\cref{eq:k} implies that
\[
\big(\sigma_\Y^{\tilde{k}(x)} \circ \tilde{h}\big)(x) = \big(\sigma_\Y^{\tilde{k}(x)+p} \circ \tilde{h}\big)(x) = \big(\sigma_\Y^{\tilde{k}(x)-1+p} \circ \tilde{h} \circ \sigma_\X\big)(x) = \big(\sigma_\Y^{\tilde{k}(x)-1} \circ \tilde{h} \circ \sigma_\X\big)(x),
\]
which again contradicts the minimality of $\tilde{k}(x)$. Therefore, $\tilde{k}(x) = 0$ when $\tilde{h}(x)$ is periodic.

Assume now that $n\in\N$ and that $\tilde{k}(x')=0$ whenever $x'\in \domsigma{\X}$ and $\sigma_\Y^n(\tilde{h}(x'))$ is periodic. Suppose that $x \in \domsigma{\X}$ with $\sigma_\Y^{n+1}(\tilde{h}(x))$ periodic, and that $n$ is the minimal nonnegative integer for which $\sigma_\Y^{n+1}(\tilde{h}(x))$ is periodic. Assuming for contradiction that $\tilde{k}(x) > 0$, we again choose $i$ according to~\cref{eq:i}. Since $i < \tilde{k}(x)$, we have $\sigma_\Y^{i+1}(\tilde{h}(x)) \in A = B$. There are two cases to consider.

For the first case, suppose that
\[
\sigma_\Y^{i+1}(\tilde{h}(x)) = \sigma_\Y^j\big(\tilde{h}(\sigma_\X(x))\big),
\]
for some $j \in \{0,\dotsc,k(x)-1\}$. From~\cref{eq:i}, we see that
\[
\big(\sigma_\Y^j \circ \tilde{h} \circ \sigma_\X\big)(x) = \big(\sigma_\Y^{i+1} \circ \tilde{h}\big)(x) = \big(\sigma_\Y^{\tilde{k}(x)}\circ \tilde{h}\circ \sigma_\X\big)(x)
\]
is periodic. In particular, $\big(\sigma_\Y^{\tilde{k}(x) - 1} \circ \tilde{h} \circ \sigma_\X\big)(x)$ is periodic (because $j\le \tilde{k}(x) - 1$). Since $i < \tilde{k}(x)$, we also see that $\big(\sigma_\Y^{\tilde{k}(x)} \circ \tilde{h}\big)(x)$ is periodic with the same period as $\big(\sigma_\Y^{\tilde{k}(x) - 1}\circ \tilde{h} \circ \sigma_\X\big)(x)$. It now follows from~\cref{eq:k} that if $p$ is a common period of $\big(\sigma_\Y^{\tilde{k}(x)} \circ \tilde{h}\big)(x)$ and $\big(\sigma_\Y^{\tilde{k}(x) - 1} \circ \tilde{h} \circ \sigma_\X\big)(x)$, then
\[
\big(\sigma_\Y^{\tilde{k}(x)} \circ \tilde{h}\big)(x) = \big(\sigma_\Y^{\tilde{k}(x)+p} \circ \tilde{h}\big)(x) = \big(\sigma_\Y^{\tilde{k}(x)-1+p} \circ \tilde{h} \circ \sigma_\X\big)(x) = \big(\sigma_\Y^{\tilde{k}(x)-1} \circ \tilde{h} \circ \sigma_\X\big)(x),
\]
which contradicts the minimality of $\tilde{k}(x)$.

For the second case, suppose that $\sigma_\Y^{i+1}(\tilde{h}(x)) = h(x)$. Choose $x'\in \X$ such that $\tilde{h}(x') = h(x)$. If $n \ge i$, then $\sigma_\Y^{n-i}(\tilde{h}(x'))=\sigma_\Y^{n+1}(\tilde{h}(x))$ is periodic, and if $n < i$, then $\tilde{h}(x')=\sigma_\Y^{i+1}(\tilde{h}(x))$ is periodic. In both cases it follows from the inductive hypothesis that $\tilde{k}(x') = 0$. The assumption that $c_{g\circ h} = c_g \circ \psi$ then implies that $g(h(x')) = g(\tilde{h}(x'))$ for all $g\in C(\Y, \Gamma)$, and hence $h(x') = \tilde{h}(x')$. Since $\tilde{h}(x') = h(x)$ and $h$ is a homeomorphism, we have $x = x'$. This means that either $(\sigma_\Y^{n - i} \circ \tilde{h})(x)$ is periodic (if $n \ge i$), or that $\tilde{h}(x)$ is periodic (if $n < i$), but this contradicts the assumption that $n$ is the minimal nonnegative integer for which $\sigma_\Y^{n+1}(\tilde{h}(x))$ is periodic. We conclude that $\tilde{k}(x) = 0$ for all $x \in \domsigma{\X}$, and this implies that $\tilde{h}$ is a conjugacy.
\end{proof}

For the proof of \cref{prop:gauge-intertwining}, we need the following two lemmas. Given an automorphism $\gamma$ of $C^*(\G_\X)$, we define
\[
C^*(\G_\X)^\gamma \coloneqq \{ f \in C^*(\G_\X) : \gamma(f) = f \}.
\]

\begin{lemma} \label{lem:diagonal-fixed}
Let $(\X,\sigma_\X)$ be a Deaconu--Renault system, and let $\Gamma$ be a locally compact abelian group that is separating for $\X$. Then
\[
C_0(\X) = \bigcap_{f \in C(\X, \Gamma)} C^*(\G_\X)^{\gamma^{\X, f}}.
\]
\end{lemma}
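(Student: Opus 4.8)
The plan is to prove the two inclusions separately, the substantive one being that the intersection is contained in $C_0(\X)$. The inclusion $C_0(\X) \subseteq \bigcap_{f} C^*(\G_\X)^{\gamma^{\X,f}}$ is immediate from the formula defining the weighted action: regarding $\xi \in C_c(\X) = C_c(\G_\X^{(0)})$ as an element of $C_c(\G_\X)$ supported on the unit space, for every $f \in C(\X,\Gamma)$ and $\chi \in \widehat{\Gamma}$ we have $\gamma_\chi^{\X,f}(\xi)(\eta) = \chi(c_f(\eta))\,\xi(\eta)$, which equals $\xi(\eta)$ both off the unit space (where $\xi$ vanishes) and on it (where $c_f = \id_\Gamma$, so $\chi(c_f) = 1$). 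Passing to norm limits shows that every element of $C_0(\X)$ is fixed by every $\gamma^{\X,f}$.

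For the reverse inclusion I would invoke the canonical injective, norm-decreasing linear map $j\colon C^*(\G_\X) \to C_0(\G_\X)$ that restricts to the identity on $C_c(\G_\X)$, whose composition with restriction to the unit space is the faithful conditional expectation onto $C_0(\X)$. Since $\G_\X$ is Hausdorff and \'etale, the unit space $\G_\X^{(0)} \cong \X$ is clopen in $\G_\X$, and an element $a \in C^*(\G_\X)$ lies in $C_0(\X)$ exactly when $j(a)$ is supported on $\G_\X^{(0)}$: subtracting the conditional expectation of $a$ and using injectivity of $j$ reduces membership in $C_0(\X)$ to the vanishing of $j(a)$ off the unit space.

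The key step is to understand how the action acts through $j$. From \cref{eq:action-induced-by-cocycle} and the density of $C_c(\G_\X)$ in $C^*(\G_\X)$, the map $j$ intertwines $\gamma_\chi^{\X,f}$ with pointwise multiplication by $\chi \circ c_f$, i.e.\ $j\big(\gamma_\chi^{\X,f}(a)\big)(\eta) = \chi(c_f(\eta))\, j(a)(\eta)$ for all $\eta \in \G_\X$. Hence if $a \in C^*(\G_\X)^{\gamma^{\X,f}}$, then $\chi(c_f(\eta))\, j(a)(\eta) = j(a)(\eta)$ for every $\chi \in \widehat{\Gamma}$, so at any $\eta = (x,m-n,y)$ with $j(a)(\eta) \ne 0$ we must have $\chi(c_f(\eta)) = 1$ for all $\chi$. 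Because $\widehat{\Gamma}$ separates the points of the locally compact abelian group $\Gamma$, this forces $c_f(\eta) = \id_\Gamma$, that is, $f^{(m)}(x) = f^{(n)}(y)$.

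Finally, if $a$ lies in the intersection over all $f \in C(\X,\Gamma)$, then for every $\eta = (x,m-n,y)$ in the support of $j(a)$ we have $f^{(m)}(x) = f^{(n)}(y)$ for all $f \in C(\X,\Gamma)$, while $\sigma_\X^m(x) = \sigma_\X^n(y)$ holds by definition of $\G_\X$. The second part of \cref{lem:separating}, applied with the separating group $\Gamma$, then gives $m = n$ and $x = y$, so $\eta \in \G_\X^{(0)}$. Thus $j(a)$ is supported on the unit space and $a \in C_0(\X)$. The main obstacle is the bookkeeping around the map $j$: one must carefully justify that the intertwining relation extends from $C_c(\G_\X)$ to all of $C^*(\G_\X)$ and that support in the clopen set $\G_\X^{(0)}$ characterises membership in $C_0(\X)$. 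Once these standard facts about \'etale groupoid C*-algebras are in place, \cref{lem:separating} supplies the essential dynamical input.
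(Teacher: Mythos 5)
Your proof is correct and follows essentially the same route as the paper's: both inclusions are handled the same way, with the characters of $\Gamma$ separating points and \cref{lem:separating} providing the key step that forces the support of a fixed element onto the unit space. The only difference is that you carefully carry out the passage from $C_c(\G_\X)$ to general elements of the fixed-point algebra via the injective norm-decreasing map $j\colon C^*(\G_\X)\to C_0(\G_\X)$, a point the paper's proof leaves implicit by working only with $\xi\in C_c(\G_\X)$ and asserting that ``the result follows.''
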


\begin{proof}
Every function in $C_0(\X)$ is fixed by all the weighted automorphisms, so one containment is clear. For the reverse containment, take $\xi \in C_c(\G_\X)$ such that $\xi$ is fixed by $\gamma^{\X,f}$ for all $f \in C(\X, \Gamma)$. If $(x, k - l, y) \in \G_\X$ with $\sigma_\X^k(x) = \sigma_\X^l(y)$ and $\xi(x, k - l, y) \ne 0$, then
\[
\xi(x, k - l, y) = \gamma_\chi^{\X, f}(\xi)(x, k - l, y) = \chi\big(f^{(k)}(x) - f^{(l)}(y)\big) \, \xi(x, k - l, y),
\]
for all $\chi \in \widehat{\Gamma}$. Since the characters of an abelian group separate points, it follows that $f^{(k)}(x) = f^{(l)}(y)$ for every $f \in C(\X, \Gamma)$. Since $\Gamma$ is separating for $\X$, it follows that $k=l$ and $x = y$, by \cref{lem:separating}. Thus $\xi$ is only supported on the unit space of $\G_\X$, and so $\xi \in C_0(\X)$, and the result follows.
\end{proof}

The following technical lemma is actually the main bulk of the proof of~\cref{prop:gauge-intertwining} below. It uses the groupoid reconstruction theory of~\cite{CRST}. We state and prove \cref{lem:technical-lemma} in a more general setting than we need here, as we believe it may be of independent interest.

Recall that if $c$ is a cocycle from $\G$ into a group $G$ with identity element $\id_G$, then $c^{-1}(\id_G)$ is a subgroupoid of $\G$. We refer the reader to~\cite{CRST} for relevant details on the coactions $\delta_{c_1}$ and $\delta_{c_2}$. The reader is invited to let $c_1$ and $c_2$ be the canonical continuous cocycles on $\G_\X$ and $\G_\Y$, respectively; in which case, the coaction condition (\cref{eq:coaction}) reduces to the condition that $\varphi$ intertwines the canonical gauge actions.

Note that in the statement below, we are not assuming that $h$ and $\tilde{h}$ are equal.

\begin{lemma} \label{lem:technical-lemma}
Let $(\X,\sigma_\X)$ and $(\Y,\sigma_\Y)$ be second-countable Deaconu--Renault systems, and let $\tilde{h}\colon \X \to \Y$ be a homeomorphism. Let $G$ be a discrete group with identity element $\id_G$. Let $c_1 \colon \G_\X \to G$ and $c_2\colon \G_\Y \to G$ be continuous cocycles such that $\Iso(c_1^{-1}(\id_G))^\circ = \X$ and $\Iso(c_2^{-1}(\id_G))^\circ = \Y$. Suppose that $\varphi\colon C^*(\G_\X) \to C^*(\G_\Y)$ is a $*$-isomorphism such that $\varphi(C_0(\X)) = C_0(\Y)$ and $\varphi(f) = f \circ \tilde{h}^{-1}$ for all $f \in C_0(\X)$, and that $\varphi$ satisfies the coaction condition
\begin{equation} \label{eq:coaction}
\delta_{c_2} \circ \varphi = (\varphi\otimes \id) \circ \delta_{c_1}.
\end{equation}
Then there is a groupoid isomorphism $\psi\colon \G_\X \to \G_\Y$ satisfying $\psi^{(0)} = \tilde{h}$ and $c_1 = c_2 \circ \psi$. Moreover, this $\psi$ has the property that $c_{g\circ h} = c_g \circ \psi$ whenever $\Gamma$ is a locally compact abelian group, $g \in C(\Y, \Gamma)$, $h\colon \X \to \Y$ is a homeomorphism, and
\begin{equation} \label{eq:technical-lemma}
\varphi \circ \gamma_\chi^{\X, g\circ h} = \gamma_\chi^{\Y, g} \circ \varphi
\end{equation}
for all $\chi \in \widehat{\Gamma}$.
\end{lemma}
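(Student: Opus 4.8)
The plan is to obtain $\psi$ from the groupoid reconstruction theory of \cite{CRST}, extracting in the process that $\varphi$ is \emph{implemented} by $\psi$, and then to read off the cocycle identity in the ``moreover'' clause by a short pointwise computation. For the first assertion, I would check that the hypotheses match those of the (graded) reconstruction theorem of \cite{CRST}: the groupoids $\G_\X$ and $\G_\Y$ are second-countable, locally compact, Hausdorff, and \'etale; the cocycles $c_1$ and $c_2$ take values in the discrete group $G$; and the assumptions $\Iso(c_1^{-1}(\id_G))^\circ = \X$ and $\Iso(c_2^{-1}(\id_G))^\circ = \Y$ say precisely that the kernel subgroupoids $c_1^{-1}(\id_G)$ and $c_2^{-1}(\id_G)$ are effective. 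This effectiveness is what makes $C_0(\X)$ and $C_0(\Y)$ Cartan in the respective fixed-point algebras (even though they need not be Cartan in all of $C^*(\G_\X)$ or $C^*(\G_\Y)$), and hence what drives the reconstruction. The hypotheses on $\varphi$ supply the remaining inputs: it is diagonal-preserving since $\varphi(C_0(\X)) = C_0(\Y)$, and it intertwines the coactions $\delta_{c_1}$ and $\delta_{c_2}$ by \cref{eq:coaction}. The theorem then yields an isomorphism $\psi\colon \G_\X \to \G_\Y$ with $c_1 = c_2 \circ \psi$ that implements $\varphi$, in the sense that $\varphi(\xi) = \xi \circ \psi^{-1}$ for all $\xi \in C_c(\G_\X)$; restricting to the unit space and comparing with $\varphi(f) = f \circ \tilde{h}^{-1}$ forces $\psi^{(0)} = \tilde{h}$.

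For the ``moreover'' clause, fix $\Gamma$, $g$, and $h$ as in the statement. The point to recognise is that the weighted action $\gamma^{\X, g\circ h}$ is exactly the action of $\widehat{\Gamma}$ induced by the cocycle $c_{g\circ h}\colon \G_\X \to \Gamma$ (here $g \circ h \in C(\X,\Gamma)$), and similarly $\gamma^{\Y, g}$ is induced by $c_g\colon \G_\Y \to \Gamma$, both via \cref{eq:action-induced-by-cocycle}. Using that $\varphi$ is implemented by $\psi$, I would evaluate the intertwining relation \cref{eq:technical-lemma} on an arbitrary $\xi \in C_c(\G_\X)$ at a point $\eta \in \G_\Y$: the left-hand side equals $\chi\big(c_{g\circ h}(\psi^{-1}(\eta))\big)\,\xi(\psi^{-1}(\eta))$, while the right-hand side equals $\chi\big(c_g(\eta)\big)\,\xi(\psi^{-1}(\eta))$. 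Choosing $\xi$ with $\xi(\psi^{-1}(\eta)) \neq 0$ gives
\[
\chi\big(c_{g\circ h}(\psi^{-1}(\eta))\big) = \chi\big(c_g(\eta)\big) \qquad \text{for all } \chi \in \widehat{\Gamma}.
\]
Since $\Gamma$ is abelian, its characters separate points, so $c_{g\circ h}(\psi^{-1}(\eta)) = c_g(\eta)$; letting $\eta$ range over $\G_\Y$ yields $c_{g\circ h} = c_g \circ \psi$. This computation never uses $h = \tilde{h}$, in keeping with the remark preceding the lemma.

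The main obstacle is the first step: invoking \cite{CRST} correctly and, crucially, extracting that $\varphi$ is genuinely implemented by the groupoid isomorphism $\psi$, rather than merely that some cocycle-preserving $\psi$ exists. This is exactly where the effectiveness of the kernel subgroupoids is indispensable, and where the failure of $C_0(\X)$ to be Cartan in $C^*(\G_\X)$ must be circumvented by passing to the $G$-grading. The ``moreover'' clause is then comparatively painless; I note that it is robust even if the reconstruction only furnishes $\varphi(\xi)(\eta) = \omega(\eta)\,\xi(\psi^{-1}(\eta))$ for some continuous $\T$-valued $\omega$, since such an $\omega$ appears identically on both sides of the intertwining relation and cancels, leaving the same separation-of-characters conclusion.
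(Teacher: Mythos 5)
Your overall architecture is the same as the paper's: obtain $\psi$ from the graded reconstruction theorem of~\cite{CRST}, then prove $c_{g\circ h}=c_g\circ\psi$ by evaluating both sides of \cref{eq:technical-lemma} at a point of $\G_\Y$, cancelling a nonzero factor, and using that characters of $\Gamma$ separate points. The cancellation step at the end is correct as logic. But there is a genuine gap at the hinge of the argument: you assume that the reconstruction yields a \emph{spatial implementation} $\varphi(\xi)=\xi\circ\psi^{-1}$ for $\xi\in C_c(\G_\X)$ (or, in your fallback, $\varphi(\xi)(\eta)=\omega(\eta)\,\xi(\psi^{-1}(\eta))$ for a continuous $\T$-valued $\omega$). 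Neither form is supplied by \cite[Theorem~6.2]{CRST}, and the first is simply false under the stated hypotheses: take $\G_\Y=\G_\X$, let $u$ be a unitary in $C_b(\X)\subset M(C^*(\G_\X))$, and let $\varphi=\operatorname{Ad}_u$. Then $\varphi$ fixes $C_0(\X)$ pointwise (so $\tilde{h}=\id_\X$), satisfies the coaction condition \cref{eq:coaction}, and commutes with every weighted action, yet $\varphi(\xi)(\eta)=u(r(\eta))\,\xi(\eta)\,\overline{u(s(\eta))}$, which is not $\xi(\eta)$. Your $\omega$-fallback happens to absorb this particular example, but the reconstruction theorem's conclusion is an isomorphism of (extended Weyl) twists, not a formula of even that shape, so the existence of such an $\omega$ is exactly what would need to be proved --- and it is not needed.

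What the paper does instead is weaker and is all that the cancellation requires: it recalls the explicit construction $\psi=\theta_\Y^{-1}\circ\varphi^*\circ\theta_\X$ through the extended Weyl groupoids, where $\varphi^*([n,x])=[\varphi(n),\tilde{h}(x)]$, and observes that if $\theta_\Y(\eta)=[n,x]$ then $n(\eta)\ne 0$ --- this is where the hypothesis $\Iso(c_2^{-1}(\id_G))^\circ=\Y$ is used. Consequently, for $\eta\in Z(\X,1,0,\sigma_\X(\X))$ and a normaliser $n\in C_c(\G_\X)$ with $n(\eta)=1$ and $\osupp(n)$ a bisection inside $Z(\X,1,0,\sigma_\X(\X))\cap c_1^{-1}(c_1(\eta))$, one gets the single nonvanishing statement $\varphi(n)(\psi(\eta))\ne 0$, with no claim about the actual value. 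The left-hand side of \cref{eq:technical-lemma} is then computed not pointwise through $\psi$ but algebraically: since $\osupp(n)\subset Z(\X,1,0,\sigma_\X(\X))$, one has $\gamma_\chi^{\X,g\circ h}(n)=(\chi\circ g\circ h)\,n$ with $\chi\circ g\circ h\in C_b(\X)\subset M(C^*(\G_\X))$, and the extension $\tilde{\varphi}$ to multiplier algebras sends $\chi\circ g\circ h$ to $(\chi\circ g\circ h)\circ\tilde{h}^{-1}$; evaluating the product at $\psi(\eta)$ produces the factor $\chi\big((g\circ h)(r(\eta))\big)\varphi(n)(\psi(\eta))$, after which your cancellation argument goes through verbatim. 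To repair your proof you would need to replace the implementation claim by this normaliser-level argument (or otherwise justify the existence of $\omega$), since as written the central computation rests on an identity that the cited theorem does not provide.
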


\begin{proof}
The first part of the lemma follows from~\cite[Theorem~6.2]{CRST}, but since we need an explicit description of the groupoid isomorphism $\psi\colon \G_\X \to \G_\Y$ in order to prove the second half of the lemma, we begin by recalling the construction of $\psi$.

For this, let us first establish some notation. The \definitionemph{open support} of $\xi \in C_c(\G_\X)$ is the set
\[
\osupp(\xi) \coloneqq \{\gamma \in \G_\X : \xi(\gamma) \ne 0 \}.
\]
The extended Weyl groupoid $\h_\X \coloneqq \h(C^*(\G_\X), C_0(\X), \delta_{c_1})$ of the triple $(C^*(\G_\X), C_0(\X), \delta_{c_1})$ consists of equivalence classes $[n,x]$ of pairs $(n,x)$, where $n$ is a normaliser of $C_0(\X)$ in $C^*(\G_\X)$ and $x \in \osupp(n)$; cf.~\cite[Section~4]{CRST}. Let $\h_\Y \coloneqq \h(C^*(\G_\Y), C_0(\Y), \delta_{c_2})$ be the extended Weyl groupoid of $(C^*(\G_\Y), C_0(\Y), \delta_{c_2})$. Let $\theta_\X \colon \G_\X \to \h_\X$ and $\theta_\Y\colon \G_\Y \to \h_\Y$ be the groupoid isomorphisms of~\cite[Proposition~6.5]{CRST}, and let $\varphi^*\colon \h_\X \to \h_\Y$ be the groupoid isomorphism given by $\varphi^*([n, x]) = [\varphi(n), \tilde{h}(x)]$ for $[n,x] \in \h_\X$; cf.~\cite[proof of Theorem~6.2]{CRST}. The composition
\[
\psi\coloneqq \theta_\Y^{-1} \circ \varphi^* \circ \theta_\X \colon \G_\X \to \G_\Y
\]
is then a groupoid isomorphism that satisfies $\psi^{(0)} = \tilde{h}$ and $c_1 = c_2 \circ \psi$; cf.~\cite[proof of Theorem~6.2]{CRST}.

We now observe that if $[n,x] \in \h_\Y$ and $\theta_\Y(\eta) = [n,x]$, then $n(\eta) \ne 0$. Indeed, following the proof of the fact that $\theta_\Y$ is surjective in \cite[proof of Proposition~6.5]{CRST}, there exist $\eta' \in \Iso({c_2}^{-1}(\id_G))^\circ$ and $\gamma \in \G_\Y$ with $s(\eta') = r(\eta') = s(\gamma) = x$ and $n(\gamma) \ne 0$ such that $\theta_\Y(\gamma (\eta')^{-1}) = [n,x]$. Our assumption that $\Iso({c_2}^{-1}(\id_G))^\circ = \Y$ implies that $\eta' = x$, and since $\theta_\Y$ is injective, it follows that $\gamma = \eta$, and, in particular, $n(\eta) \ne 0$.

Now, let $h\colon \X \to \Y$ be a homeomorphism (which need not be the same as $\tilde{h}$), let $\Gamma$ be a locally compact abelian group, and fix $g \in C(\Y, \Gamma)$ such that $\varphi \circ \gamma_\chi^{\X, g \circ h} = \gamma_\chi^{\Y, g} \circ \varphi$ for all $\chi \in \widehat{\Gamma}$. We need to show that $c_{g\circ h} = c_g \circ \psi$. Since $\G_\X$ is generated by elements either belonging to $\G_\X^{(0)}$ or to the compact open set $Z(\X, 1, 0, \sigma_\X(\X)) = \{ \big(x, 1, \sigma_\X(x)\big) : x\in \domsigma{\X}\}$, it suffices to verify that
\begin{equation} \label{eq:cocycles-eta}
(g \circ h)(r(\eta)) = c_g(\psi(\eta)),
\end{equation}
for $\eta \in Z(\X, 1, 0, \sigma_\X(\X))$.

Choose $n \in C_c(\G_\X)$ with $n(\eta) = 1$ such that $\osupp(n)$ is a bisection contained in $Z(\X,1,0,\sigma_\X(\X)) \cap {c_1}^{-1}(c_1(\eta))$. By the construction of $\psi$, we have
\[
\theta_\Y\big(\psi(\eta)\big) = \varphi^*\big(\theta_\X(\eta)\big) = [\varphi(n), \tilde{h}(s(\eta))],
\]
and so the observation above implies that $\varphi(n)(\psi(\eta)) \ne 0$. Moreover, since $\osupp(n) \subset Z(\X,1,0,\sigma_\X(\X))$, we have $\gamma_\chi^{\X, g\circ h}(n) = (\chi \circ g\circ h) n$ with $\chi \circ g\circ h \in C_b(\X)$. Here, we view $C_b(\X)$ as a subalgebra of the multiplier algebra $M(C^*(\G_\X))$.

There is a $*$-isomorphism of multiplier algebras $\tilde{\varphi}\colon M(C^*(\G_\X)) \to M(C^*(\G_\Y))$ which extends $\varphi$, and since the diagonal subalgebras contain approximate units of the ambient C*-algebras, we have $\tilde{\varphi}(C_b(\X)) = C_b(\Y)$ with $\tilde{\varphi}(f) = f \circ (\beta \tilde{h})^{-1}$, where $\beta \tilde{h}\colon \beta \X \to \beta \Y$ is the unique extension of $\tilde{h}$ to the Stone--\v{C}ech compactifications; c.f.~for example, \cite[Proposition~3.12.10, and~3.12.12]{Pedersen}.
It now follows that
\begin{align*}
\varphi\big(\gamma_\chi^{\X, g \circ h}(n)\big)(\psi(\eta)) &= \big(\tilde{\varphi}(\chi \circ g \circ h) \varphi(n)\big) (\psi(\eta)) \\
&= \chi\big( (g \circ h \circ \tilde{h}^{-1} \circ \psi^{(0)})(r(\eta))\big) \varphi(n)(\psi(\eta)) \\
&= \chi\big( (g \circ h)(r(\eta))\big) \varphi(n)(\psi(\eta)).
\end{align*}
Applying this observation together with \cref{eq:technical-lemma}, we see that
\begin{align*}
\chi\big( c_g(\psi(\eta))\big) \varphi(n)(\psi(\eta)) &= \gamma_\chi^{\Y, g}\big(\varphi(n)\big)(\psi(\eta)) \\
&= \varphi\big(\gamma_\chi^{\X, g\circ h}(n)\big)(\psi(\eta)) \\
&= \chi\big((g\circ h)(r(\eta))\big) \varphi(n)(\psi(\eta)),
\end{align*}
for all $\chi \in \widehat{\Gamma}$. Since $\varphi(n)(\psi(\eta)) \ne 0$, it follows that
\[
\chi\big((g\circ h)(r(\eta))\big) = \chi\big(c_g(\psi(\eta))\big),
\]
for all $\chi \in \widehat{\Gamma}$. Since the characters of an abelian group separate points, \cref{eq:cocycles-eta} follows.
\end{proof}

Before we get to \cref{prop:gauge-intertwining}, we point out that if $(\X,\sigma_\X)$ and $(\Y,\sigma_\Y)$ are second-countable topologically free Deaconu--Renault systems, then the coaction condition \cref{eq:coaction} in \cref{lem:technical-lemma} is superfluous. Although we do not need this fact in this paper, we believe it is worth recording in a corollary.

\begin{corollary}
Suppose that $(\X,\sigma_\X)$ and $(\Y,\sigma_\Y)$ are second-countable and topologically free Deaconu--Renault systems. If $\varphi\colon C^*(\G_\X) \to C^*(\G_\Y)$ is a $*$-isomorphism satisfying $\varphi(C_0(\X)) = C_0(\Y)$, then there is a groupoid isomorphism $\psi\colon \G_\X \to \G_\Y$ such that $\varphi(f)=f\circ(\psi^{(0)})^{-1}$, for $f\in C_0(\X)$. If, moreover, $\Gamma$ is a locally compact abelian group, $g \in C(\Y, \Gamma)$, $h\colon \X \to \Y$ is a homeomorphism, and $\varphi \circ \gamma_\chi^{\X, g\circ h} = \gamma_\chi^{\Y, g} \circ \varphi$ for all $\chi \in \widehat{\Gamma}$, then $c_{g\circ h} = c_g \circ \psi$.
\end{corollary}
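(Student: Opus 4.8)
The plan is to deduce this corollary directly from \cref{lem:technical-lemma} by applying that result with $G$ taken to be the trivial group, in which case the coaction condition \cref{eq:coaction} is automatically satisfied. First, since $\varphi$ is a $*$-isomorphism with $\varphi(C_0(\X)) = C_0(\Y)$, Gelfand duality provides a homeomorphism $\tilde{h}\colon \X \to \Y$ such that $\varphi(f) = f \circ \tilde{h}^{-1}$ for all $f \in C_0(\X)$; this is the homeomorphism to which I will apply \cref{lem:technical-lemma}.

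Next I would set $G \coloneqq \{\id_G\}$ and let $c_1\colon \G_\X \to G$ and $c_2\colon \G_\Y \to G$ be the (necessarily constant) cocycles. Since $c_1 \equiv \id_G$ and $c_2 \equiv \id_G$, we have $c_1^{-1}(\id_G) = \G_\X$ and $c_2^{-1}(\id_G) = \G_\Y$, so the hypotheses $\Iso(c_1^{-1}(\id_G))^\circ = \X$ and $\Iso(c_2^{-1}(\id_G))^\circ = \Y$ of \cref{lem:technical-lemma} reduce to $\Iso(\G_\X)^\circ = \X$ and $\Iso(\G_\Y)^\circ = \Y$, which hold by topological freeness and \cref{lem:top-free-implies-effective}. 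The one point that needs checking is the coaction condition: because $C^*(G) \cong \mathbb{C}$ with trivial canonical unitary, each coaction $\delta_{c_i}$ collapses to the embedding $a \mapsto a \otimes 1$, so both sides of \cref{eq:coaction} send $a$ to $\varphi(a) \otimes 1$ and the condition holds automatically.

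With these observations, \cref{lem:technical-lemma} yields a groupoid isomorphism $\psi\colon \G_\X \to \G_\Y$ with $\psi^{(0)} = \tilde{h}$ (the relation $c_1 = c_2 \circ \psi$ being vacuous), whence $\varphi(f) = f \circ (\psi^{(0)})^{-1}$ for $f \in C_0(\X)$, giving the first claim. For the second claim, I would simply invoke the ``moreover'' part of \cref{lem:technical-lemma}, which asserts that this same $\psi$ satisfies $c_{g \circ h} = c_g \circ \psi$ precisely under the stated intertwining hypothesis $\varphi \circ \gamma_\chi^{\X, g\circ h} = \gamma_\chi^{\Y, g} \circ \varphi$. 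The only real content beyond citing \cref{lem:technical-lemma} is the observation that topological freeness lets us take $G$ trivial and thereby discard the coaction hypothesis; I expect this degeneration of \cref{eq:coaction} to be the one step worth spelling out, while everything else is a direct application.
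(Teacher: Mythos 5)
Your proposal is correct and follows exactly the paper's own route: invoke \cref{lem:top-free-implies-effective} to get $\Iso(\G_\X)^\circ = \X$ and $\Iso(\G_\Y)^\circ = \Y$, then apply \cref{lem:technical-lemma} with $G$ trivial and the trivial cocycles, for which the coaction condition \cref{eq:coaction} is automatic. Your additional remarks (Gelfand duality supplying $\tilde{h}$, and the explicit check that $\delta_{c_i}$ collapses to $a \mapsto a \otimes 1$) merely spell out details the paper leaves implicit.
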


\begin{proof}
By \cref{lem:top-free-implies-effective}, we have $\Iso(\G_\X)^\circ = \X$ and $\Iso(\G_\Y)^\circ = \Y$, and so the result follows immediately from \cref{lem:technical-lemma} by letting $G$ be the trivial group and taking $c_1\colon \G_\X \to G$ and $c_2\colon \G_\Y \to G$ to be the trivial cocycles.
\end{proof}

Finally, we prove \cref{prop:gauge-intertwining}, from which it follows that the two conditions \cref{3.1(4)(i),3.1(4)(ii)} in \cref{cor:summary}\cref{3.1(4)} are equivalent, and that \cref{3.1(1),3.1(4)} in \cref{cor:summary} are equivalent.

\begin{proposition} \label{prop:gauge-intertwining}
Let $(\X,\sigma_\X)$ and $(\Y,\sigma_\Y)$ be second-countable Deaconu--Renault systems.

\begin{enumerate}[label=(\roman*)]
\item \label{item:i} If $h\colon \X \to \Y$ is a conjugacy, then there is a $*$-isomorphism $\varphi\colon C^*(\G_\X) \to C^*(\G_\Y)$ satisfying $\varphi(C_0(\X))= C_0(\Y)$, $\varphi(f)=f\circ h^{-1}$ for all $f \in C_0(\X)$, and $\varphi \circ \gamma_\chi^{\X, g\circ h} = \gamma_\chi^{\Y, g} \circ \varphi$ whenever $\Gamma$ is a locally compact abelian group, $g \in C(\Y,\Gamma)$, and $\chi\in\widehat{\Gamma}$.
\item \label{item:ii} Conversely, suppose that $\varphi\colon C^*(\G_\X) \to C^*(\G_\Y)$ is a $*$-isomorphism, $h\colon \X \to \Y$ is a homeomorphism (which is not necessarily a conjugacy), and $\Gamma$ is a locally compact abelian group that is separating for $\X$ and $\Y$ such that $\varphi \circ \gamma_\chi^{\X, g \circ h} = \gamma_\chi^{\Y, g} \circ \varphi$ for all $\chi \in \widehat{\Gamma}$ and $g \in C(\Y,\Gamma)$. Then $\varphi(C_0(\X)) = C_0(\Y)$, and there exists a conjugacy $\tilde{h}\colon \X \to \Y$ such that $\varphi(f)=f\circ \tilde{h}^{-1}$ for all $f \in C_0(\X)$ and $\varphi \circ \gamma_\chi^{\X, g\circ \tilde{h}} = \gamma_\chi^{\Y, g} \circ \varphi$ for all $\chi \in \widehat{\Gamma}$ and $g \in C(\Y,\Gamma)$.
\end{enumerate}
\end{proposition}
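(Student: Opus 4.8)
The plan is to prove the two statements separately: the forward direction is a quick consequence of \cref{prop:groupoid-conjugacy}, while the converse is where the real work sits and is extracted from \cref{lem:diagonal-fixed,lem:technical-lemma}.

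For the first statement, suppose $h$ is a conjugacy. By \cref{prop:groupoid-conjugacy} it induces a groupoid isomorphism $\psi\colon \G_\X \to \G_\Y$ with $\psi(x,p,y) = (h(x), p, h(y))$, and $\psi$ in turn induces a $*$-isomorphism $\varphi\colon C^*(\G_\X) \to C^*(\G_\Y)$ via $\varphi(\xi) = \xi \circ \psi^{-1}$ on $C_c(\G_\X)$. Since $\psi$ restricts to $h$ on the unit spaces, $\varphi$ carries $C_0(\X)$ onto $C_0(\Y)$ by $f \mapsto f \circ h^{-1}$, which settles the first two required properties. For the intertwining I would use the relation $c_{g \circ h} = c_g \circ \psi$ for all $g \in C(\Y,\Gamma)$, which follows directly from $h$ being a conjugacy (cf.\ the proof of the implication $\cref{3.8(1)}\implies\cref{3.8(2)}$ in \cref{prop:groupoid-conjugacy}, which needs no separating hypothesis). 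Feeding this into the defining formula \cref{eq:action-induced-by-cocycle} and evaluating both sides of $\varphi \circ \gamma_\chi^{\X, g\circ h} = \gamma_\chi^{\Y, g} \circ \varphi$ at $\eta \in \G_\Y$, each reduces to $\chi(c_g(\eta))\,\xi(\psi^{-1}(\eta))$, giving equality.

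For the converse, the first task is to pin down $\varphi$ on the diagonal. Since $h$ is a homeomorphism, as $g$ runs over $C(\Y,\Gamma)$ the composites $g \circ h$ run over all of $C(\X,\Gamma)$, so the intertwining hypothesis sends each fixed-point algebra $C^*(\G_\X)^{\gamma^{\X, g\circ h}}$ onto $C^*(\G_\Y)^{\gamma^{\Y, g}}$; intersecting over $g$ and invoking \cref{lem:diagonal-fixed} yields $\varphi(C_0(\X)) = C_0(\Y)$. Restricting and applying Gelfand duality produces a homeomorphism $\tilde{h}\colon \X \to \Y$ with $\varphi(f) = f \circ \tilde{h}^{-1}$. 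Next I would verify the hypotheses of \cref{lem:technical-lemma} with $G = \Z$ and $c_1 = c_\X$, $c_2 = c_\Y$ the canonical cocycles. The isotropy condition is automatic, since an element of $c_\X^{-1}(0)$ lies in the isotropy only if its source and range coincide, so $\Iso(c_\X^{-1}(0))^\circ = \G_\X^{(0)} = \X$, and likewise for $\Y$. For the coaction condition \cref{eq:coaction}, which for canonical cocycles amounts to $\varphi$ intertwining the canonical gauge actions, I would take $g$ constantly equal to some $\gamma_0 \in \Gamma$ of infinite order (which exists as $\Gamma$ is separating); then $g \circ h$ is the same constant, and $\gamma^{\X, g\circ h}_\chi = \gamma^\X_{\chi(\gamma_0)}$, $\gamma^{\Y, g}_\chi = \gamma^\Y_{\chi(\gamma_0)}$. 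The hypothesis gives the gauge intertwining at every $z = \chi(\gamma_0)$, and since $\gamma_0$ has infinite order and characters separate points, the subgroup $\{\chi(\gamma_0) : \chi \in \widehat{\Gamma}\}$ is dense in $\T$; continuity of the gauge action upgrades this to all $z \in \T$.

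With these in place, \cref{lem:technical-lemma} produces a groupoid isomorphism $\psi\colon \G_\X \to \G_\Y$ with $\psi^{(0)} = \tilde{h}$ and $c_\X = c_\Y \circ \psi$, and its final clause gives $c_{g \circ h} = c_g \circ \psi$ for all $g \in C(\Y,\Gamma)$. Because $\psi$ preserves the canonical cocycle and restricts to $\tilde{h}$ on units, it has the form $\psi(x,p,y) = (\tilde{h}(x), p, \tilde{h}(y))$, so condition \cref{3.8(3)} of \cref{prop:groupoid-conjugacy} holds with the given $h$; the equivalences there show $\tilde{h}$ is a conjugacy and, via condition \cref{3.8(2)}, that $c_{g \circ \tilde{h}} = c_g \circ \psi$. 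Comparing the two identities yields $c_{g \circ h} = c_{g \circ \tilde{h}}$ on $\G_\X$, whence $\gamma^{\X, g\circ h} = \gamma^{\X, g\circ \tilde{h}}$ (the action depends on its function only through the induced cocycle, by \cref{eq:action-induced-by-cocycle}); substituting into the hypothesis gives the required $\varphi \circ \gamma_\chi^{\X, g\circ \tilde{h}} = \gamma_\chi^{\Y, g} \circ \varphi$. I expect the main obstacle to be the reduction of the coaction condition to the gauge-action intertwining together with the density argument extracting it from the weighted-action hypothesis; once \cref{lem:technical-lemma} is available the remainder is bookkeeping linking $\psi$, $\tilde{h}$, and the cocycles.
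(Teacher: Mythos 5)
Your proof is correct and takes essentially the same route as the paper: part \cref{item:i} via \cref{prop:groupoid-conjugacy} and the induced $*$-isomorphism of groupoid C*-algebras, and part \cref{item:ii} via \cref{lem:diagonal-fixed}, Gelfand duality, the constant-function-of-infinite-order reduction of the coaction condition, \cref{lem:technical-lemma}, and finally \cref{prop:groupoid-conjugacy} to upgrade $\tilde{h}$ to a conjugacy and replace $c_{g\circ h}$ by $c_{g\circ\tilde{h}}$. The only differences are that you make explicit two points the paper leaves implicit, namely the density of $\{\chi(\gamma_0):\chi\in\widehat{\Gamma}\}$ in $\T$ needed to pass from the weighted-action hypothesis to the canonical gauge intertwining, and the verification that $\Iso(c_\X^{-1}(0))^\circ=\X$ for the canonical cocycle.
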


\begin{proof}
For part~\cref{item:i}, suppose that $h\colon \X \to \Y$ is a conjugacy. By \cref{prop:groupoid-conjugacy}, there is a groupoid isomorphism $\psi\colon \G_\X \to \G_\Y$ satisfying
\[
\psi(x, p, y) = (h(x), p, h(y)),
\]
for $(x,p,y)\in \G_\X$. This isomorphism induces a $*$-isomorphism $\varphi\colon C^*(\G_\X) \to C^*(\G_\Y)$ satisfying $\varphi(\xi) = \xi \circ \psi^{-1}$ for $\xi \in C_c(\G_\X)$ and $\varphi(C_0(\X)) = C_0(\Y)$ with $\varphi(f) = f \circ h^{-1}$ for $f \in C_0(\X)$. Suppose that $\Gamma$ is a locally compact abelian group and $g\in C(\Y, \Gamma)$. It follows from \cref{prop:groupoid-conjugacy} that $c_{g\circ h} = c_g\circ \psi$. (Note that the proof of the relevant part of \cref{prop:groupoid-conjugacy} does not require $\Gamma$ to be separating for $\Y$.) This implies that $\varphi\circ \gamma_\chi^{\X, g\circ h} = \gamma_\chi^{\Y, g}\circ \varphi$ for all $\chi\in \widehat{\Gamma}$.

For part~\cref{item:ii}, suppose that $\varphi\colon C^*(\G_\X) \to C^*(\G_\Y)$ is a $*$-isomorphism, $h\colon \X \to \Y$ is a homeomorphism, and $\Gamma$ is a locally compact abelian group that is separating for $\X$ and $\Y$ such that $\varphi \circ \gamma_\chi^{\X, g\circ h} = \gamma_\chi^{\Y, g} \circ \varphi$ for all $\chi \in \widehat{\Gamma}$ and $g \in C(\Y,\Gamma)$. Since $\Gamma$ is separating for both $\X$ and $\Y$, it follows from \cref{lem:diagonal-fixed} that $\varphi(C_0(\X)) = C_0(\Y)$. Let $\tilde{h}\colon \X \to \Y$ be the induced homeomorphism satisfying $\varphi(f) = f\circ \tilde{h}^{-1}$ for $f\in C_0(\X)$ from Gelfand duality.

Since $\Gamma$ is separating for $\X$ and $\Y$, it contains an element $\zeta$ of infinite order. Choose $g\in C(\Y, \Gamma)$ to be constantly equal to $\zeta$. Then $\gamma_\chi^{\Y, g}=\gamma_{\chi(\zeta)}^\Y$ and $\gamma_\chi^{\X, g\circ h}=\gamma_{\chi(\zeta)}^\X$ for $\chi \in \widehat{\Gamma}$. Since $\varphi \circ \gamma_\chi^{\X, g\circ h} = \gamma_\chi^{\Y, g} \circ \varphi$ for all $\chi \in \widehat{\Gamma}$, it follows that if we let $G=\Z$, $c_1=c_\X$, and $c_2=c_\Y$, then \cref{eq:coaction} in \cref{lem:technical-lemma} holds. An application of \cref{lem:technical-lemma} thus gives us a groupoid isomorphism $\psi\colon \G_\X \to \G_\Y$ with $\psi^{(0)} = \tilde{h}$ such that $c_{g\circ h} = c_g \circ \psi$ for all $g \in C(\Y, \Gamma)$. It now follows from \cref{prop:groupoid-conjugacy} that $\tilde{h}=\psi^{(0)}$ is a conjugacy, and that $c_{g\circ \tilde{h}} = c_g \circ \psi$ for all $g \in C(\Y, \Gamma)$.

Fix $g \in C(\Y, \Gamma)$. Then $c_{g\circ \tilde{h}} = c_g \circ \psi = c_{g\circ h}$. It follows that $\gamma^{\X, g \circ \tilde{h}} = \gamma^{\X, g \circ h}$, and thus
\[
\varphi \circ \gamma_\chi^{\X, g \circ \tilde{h}} = \varphi \circ \gamma_\chi^{\X, g \circ h} = \gamma_\chi^{\Y, g} \circ \varphi
\]
for all $\chi \in \widehat{\Gamma}$.
\end{proof}

\begin{remark}
In~\cite{ERS20}, Ruiz, Sims, and the fourth-named author show that a pair of amplified graphs (i.e.~graphs in which every vertex emits either infinitely many or no edges to any other vertex) are graph-isomorphic if and only if there is a $*$-isomorphism of their graph C*-algebras that intertwines the canonical gauge actions. It follows from this and \cref{prop:gauge-intertwining} that the boundary-path spaces of two amplified graphs are conjugate if and only if there is a $*$-isomorphism of their graph C*-algebras that intertwines the canonical gauge actions. This is an interesting result which we cannot expect to hold for larger classes of graphs. In fact, it is known that if the boundary-path spaces of two directed graphs are \definitionemph{eventually conjugate}, then there is a $*$-isomorphism of their graph C*-algebras that intertwines the canonical gauge actions (see \cite[Theorem~4.1]{Carlsen-Rout2017}), and \cite[Example~3.6]{BC20a} provides an example of two (finite) directed graphs (with no sinks and no sources) with boundary-path spaces that are eventually conjugate, but not conjugate.
\end{remark}

\vspace{4ex}
\end{document}